\DeclareMathOperator{\re}{Re}
\DeclareMathOperator{\im}{Im}
\DeclareMathOperator{\supp}{supp}
\newtheorem{theorem}{Theorem}[section]
\newtheorem{lemma}[theorem]{Lemma}
\newtheorem{proposition}[theorem]{Proposition}
\newtheorem{corollary}[theorem]{Corollary}
\numberwithin{equation}{section}
\title[Fourier-analytic approach to vertical projections]{A Euclidean Fourier-analytic approach to vertical projections in the Heisenberg group}
\author{Terence L.~J.~Harris}
\address{Department of Mathematics, Cornell University, Ithaca, NY 14853, USA}
\email{tlh236@cornell.edu}
\subjclass[2020]{28A78; 28A80}
\keywords{Heisenberg group, Hausdorff dimension, vertical projections}
\begin{document} 
\begin{abstract} An improved a.e.~lower bound is given for Hausdorff dimension under vertical projections in the first Heisenberg group, with respect to the Carnot-Carathéodory metric. This improves the known lower bound, and answers a question of Fässler and Hovila. The approach uses the Euclidean Fourier transform, Basset's integral formula, and modified Bessel functions of the second kind.  \end{abstract}
\maketitle

\section{Introduction} Let $\mathbb{H}$ be the first Heisenberg group, identified with $\mathbb{C} \times \mathbb{R}$ and equipped with the group law
\[ (z,t) \ast (\zeta, \tau) = (z+\zeta, t+\tau + 2 \omega(z, \zeta) ), \]
where $\omega(z,\zeta) = \im \left( z \overline{\zeta} \right)$.  The Carnot-Carathéodory metric on $\mathbb{H}$ is bi-Lipschitz equivalent to the Korányi metric
\[ d_{\mathbb{H}}((z,t), (\zeta, \tau) ) = \left\lVert (\zeta, \tau)^{-1}  \ast (z,t) \right\rVert_{\mathbb{H}}, \]
where 
\[ \left\lVert (z,t)\right\rVert_{\mathbb{H}}  = \left( \left\lvert z\right\rvert^4 + t^2 \right)^{1/4}; \]
see \cite[pp.~18--19]{capogna}. This work gives an improved a.e.~lower bound for the Hausdorff dimension of sets under vertical projections in $\mathbb{H}$, where the Hausdorff dimension $\dim A$ of a set $A \subseteq \mathbb{H}$ is defined through the Korányi metric (equivalently the Carnot-Carathéodory metric). The definition of the vertical projections will be summarised briefly here, but see \cite{balogh} and \cite{balogh2} for more background. 

For each $\theta \in [0, \pi)$, let 
\[ \mathbb{V}_{\theta}^{\perp} = \left\{ \left(\lambda_1 ie^{i \theta}, \lambda_2 \right) \in \mathbb{C} \times \mathbb{R} : \lambda_1, \lambda_2 \in \mathbb{R} \right\}, \]
and 
\[ \mathbb{V}_{\theta} = \left\{ \left(\lambda e^{i \theta}, 0\right) \in \mathbb{C} \times \mathbb{R} : \lambda \in \mathbb{R} \right\}. \]
Then each $(z,t) \in \mathbb{H}$ can be uniquely written as a product 
\[ (z,t) = P_{\mathbb{V}_{\theta}^{\perp}}(z,t) \ast P_{\mathbb{V}_{\theta}}(z,t) \]
 of an element of $\mathbb{V}_{\theta}^{\perp}$ on the left, with an element of $\mathbb{V}_{\theta}$ on the right. For each $\theta \in [0, \pi)$, this defines the vertical projection $P_{\mathbb{V}_{\theta}^{\perp}}$ and the horizontal projection $P_{\mathbb{V}_{\theta}}$. A formula for $P_{\mathbb{V}_{\theta}^{\perp}}$ is 
\[ P_{\mathbb{V}_{\theta}^{\perp}}(z,t) = \left( \pi_{V_{\theta}^{\perp}}(z), t + 2\omega\left( \pi_{V_{\theta}}(z), z \right) \right), \]
where $\pi_{V_{\theta}^{\perp}}$ is the orthogonal projection onto the line in $\mathbb{R}^2$ with direction $ie^{i \theta}$, and $\pi_{V_{\theta}}$ is the orthogonal projection onto the line in $\mathbb{R}^2$ with direction $e^{i \theta}$. 

In \cite[Conjecture~1.5]{balogh} it was conjectured that for any (presumably Borel or analytic) set $A \subseteq \mathbb{H}$, $\dim P_{\mathbb{V}_{\theta}^{\perp}}(A) \geq \min\{ \dim A, 3\}$ for a.e.~$\theta \in [0, \pi)$, and that if $\dim A > 3$ then $P_{\mathbb{V}_{\theta}^{\perp}}(A)$ has positive area for a.e.~$\theta \in [0, \pi)$. This conjecture is known in the range $\dim A \leq 1$; see \cite[Theorem~1.4]{balogh}.  In \cite{fassler} and \cite{me}, some improvements were made beyond the lower bound $\dim P_{\mathbb{V}_{\theta}^{\perp}}(A) \geq 1$ for sets with $\dim A > 2$. Question~4.2 from \cite{fassler} asked whether any improvement over the lower bound of 1 was possible for sets of dimension between 1 and 2. The following theorem gives a positive answer. 

\begin{theorem} \label{mainthm} Let $A \subseteq \mathbb{H}$ be an analytic set with $\dim A > 1$. Then 
\[ \dim P_{\mathbb{V}_{\theta}^{\perp}}(A) \geq \min\left\{ \frac{ 1+\dim A }{2}, 2 \right\}, \]
for a.e.~$ \theta \in [0, \pi)$. 
\end{theorem}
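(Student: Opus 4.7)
The plan is a Fourier-analytic energy argument applied to the $\theta$-average of the parabolic energy of the projected measure. By Frostman's lemma it suffices to show: for every $s < \dim A$ and every $s' < \min\{(1+s)/2, 2\}$, if $\mu$ is a compactly supported Borel probability measure on $A$ with $\mu(B_{d_{\mathbb{H}}}(p, r)) \lesssim r^s$, then $\int_0^\pi I_{s'}(\nu_\theta)\, d\theta < \infty$, where $\nu_\theta := (P_{\mathbb{V}_\theta^\perp})_*\mu$. A short computation with the group law shows that the Korányi metric on the vertical subgroup $\mathbb{V}_\theta^\perp$ coincides with the parabolic metric $((\lambda_1 - \lambda'_1)^4 + (\lambda_2 - \lambda'_2)^2)^{1/4}$, so $I_{s'}(\nu_\theta)$ is just the $s'$-energy of $\nu_\theta$ against this kernel. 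Writing $\Delta z = z - z'$, $\Delta t = t - t'$, $w = (z + z')/2$, and substituting the formula for $P_{\mathbb{V}_\theta^\perp}$ from the introduction, Fubini rewrites the averaged energy as
\[ \int_0^\pi I_{s'}(\nu_\theta)\, d\theta = \iint J(p,q)\, d\mu(p)\, d\mu(q), \]
where
\[ J(p,q) = \int_0^\pi \Bigl[ \bigl(\im(\Delta z\, e^{-i\theta})\bigr)^4 + \bigl(\Delta t - 2\im(w\, \Delta z\, e^{-2i\theta})\bigr)^2 \Bigr]^{-s'/4}\, d\theta. \]

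The core step is to analyse $J(p,q)$ using Basset's integral formula. With index $\nu = s'/4 - 1/2 \in (-1/2, 0)$, Basset's formula yields the representation
\[ (a^4 + b^2)^{-s'/4} = C_{s'} \int_{\mathbb{R}} |\eta|^{s'/4 - 1/2}\, |a|^{1 - s'/2}\, K_{s'/4 - 1/2}(a^2|\eta|)\, e^{i\eta b}\, d\eta, \]
where $K_\nu$ is the modified Bessel function of the second kind. Applying this with $a = \im(\Delta z\, e^{-i\theta})$, $b = \Delta t - 2\im(w\, \Delta z\, e^{-2i\theta})$ and interchanging the order of integration, the inner $\theta$-integral becomes the product of a slowly varying factor $|a|^{1-s'/2} K_{s'/4-1/2}(a^2|\eta|)$ with the oscillation $\exp(-2i\eta\, \im(w\Delta z\, e^{-2i\theta}))$. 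In the principal regime $|\eta|\, |\Delta z|^2 \lesssim 1$, the small-argument asymptotic $K_\nu(r) \sim \tfrac{1}{2}\Gamma(|\nu|)(2/r)^{|\nu|}$ (valid because $\nu < 0$) cancels the $\theta$-dependence of the prefactor, and the $\theta$-integral of the oscillation collapses via $\int_0^\pi e^{iA \sin(2\theta - \psi)}\, d\theta = \pi J_0(|A|)$ with $|A| \asymp |\eta|\, |w|\, |\Delta z|$. The complementary regime $|\eta|\, |\Delta z|^2 \gtrsim 1$ is controlled by the exponential decay of $K_\nu$ at infinity.

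Combining these ingredients with the standard bound $|J_0(r)| \lesssim \min\{1, r^{-1/2}\}$, and exploiting the $e^{i\eta \Delta t}$ factor (integration by parts in $\eta$ when $|\Delta t|$ is large compared to $|\Delta z|^2$), I expect the analysis to yield
\[ \iint J(p,q)\, d\mu(p)\, d\mu(q) \lesssim I_{2s' - 1}(\mu), \]
where $I_{2s' - 1}(\mu)$ is the Korányi $(2s'-1)$-energy of $\mu$, finite whenever $2s' - 1 < s$, i.e.~$s' < (1+s)/2$. Consequently $I_{s'}(\nu_\theta) < \infty$ for a.e.~$\theta \in [0, \pi)$, which gives $\dim P_{\mathbb{V}_\theta^\perp}(A) \geq s'$ a.e.; letting $s \nearrow \dim A$ and $s' \nearrow \min\{(1+\dim A)/2, 2\}$ completes the proof.

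The main obstacle is precisely this last step. After the manipulations above, controlling the $\iint J\, d\mu\, d\mu$ reduces to estimating a one-dimensional integral over $\eta$ of a product of $J_0(c|\eta||w||\Delta z|)$, a power of $|\eta|$, and the Heisenberg-vertical oscillation $e^{i\eta\Delta t}$, uniformly in the four independent geometric scales $|\Delta z|$, $|\Delta t|$, $|w|$ and $|\eta|$. Degenerate configurations---pairs where $|w|$ is anomalously small (points nearly symmetric about the origin of $\mathbb{H}$), or where $|\Delta t| \gg |\Delta z|^2$ so that $d_{\mathbb{H}}(p,q)$ is dominated by the vertical component---demand separate treatment, and verifying that the resulting $(p,q)$-kernel integrates against $\mu \times \mu$ to give (at worst) the Korányi $(2s'-1)$-energy, rather than some weaker non-integrable quantity, is the delicate technical heart of the argument.
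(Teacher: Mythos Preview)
Your overall strategy---reduce to an averaged energy bound and exploit Basset's formula---is in the same family as the paper's, but the execution diverges at two essential points, and the gap you identify as the ``main obstacle'' is real.

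\textbf{Where the paper differs.} The paper does \emph{not} apply Basset's formula directly to the kernel and then integrate in $\theta$. Instead it takes the full two-dimensional Euclidean Fourier transform of the pushforward measure on $\mathbb{V}_\theta^\perp\cong\mathbb{R}^2$, and the key analytic input is the pointwise inequality $0<\widehat{f_{s'}}(\xi_1,\xi_2)\lesssim f_{3-s'}(\xi_1,\xi_2)$ for the two-dimensional Kor\'anyi kernel $f_{s'}(x,t)=(x^4+t^2)^{-s'/4}$ (this is the paper's Lemma~\ref{koranyipotential}, itself proved via Basset's formula and Tuck's positivity criterion). The positivity $\widehat{f_{s'}}>0$ is what permits replacing the kernel by the simpler weight $f_{3-s'}$ on the frequency side without modulus signs, after which the problem becomes a dyadic oscillatory-integral estimate in the frequency variables $(r,\rho)$ and the angle $\theta$ simultaneously. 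Your one-variable Fourier decomposition (only in the vertical frequency $\eta$) followed by a $J_0$ computation in $\theta$ is a legitimate alternative parametrisation, but it forces you to carry the $\theta$-dependent prefactor $|a(\theta)|^{1-s'/2}K_{\nu}(a(\theta)^2|\eta|)$ through the $\theta$-integral; your claim that this prefactor becomes $\theta$-independent is only an asymptotic statement, and the error terms (as well as the complementary regime where $a(\theta)$ is small but $|\eta|\,|\Delta z|^2$ is large) reinstate exactly the kind of coupling you are trying to avoid.

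\textbf{The genuine gap.} More importantly, the paper does \emph{not} prove the clean inequality $\iint J\,d\mu\,d\mu\lesssim I_{2s'-1}(\mu)$ that you hope for, and the proof it gives strongly suggests this inequality is unavailable without additional preparation. The paper first \emph{localises} $\mu$ so that its support lies in a thin sector $\{|z|>c,\ |\arg z-\theta_0|<\epsilon^3\}$---this forces $|w|=|z+\zeta|/2\gtrsim 1$ for all pairs in the support, eliminating precisely the degenerate configuration you flag---and then \emph{excludes} a small set of angles $\theta$ near $\theta_0+\pi/2$. Only after both reductions does the oscillatory analysis go through, and even then the paper works with the Frostman ball condition $\mu(B_{\mathbb{H}}(p,r))\lesssim r^\alpha$ together with a dyadic decomposition in frequency and in $d_{\mathbb{H}}(p,q)$, rather than with the $(2s'-1)$-energy directly. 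Without the localisation, pairs with $|w|$ small make your $J_0$ factor useless (no decay), and the resulting $\eta$-integral need not be controlled by the Kor\'anyi distance alone. So the ``separate treatment'' you anticipate for the degenerate configurations is not a side issue: it is the crux, and the paper's resolution---pre-localise $\mu$, throw away an $\epsilon$-set of angles, and argue via dyadic shells rather than a single kernel bound---is a substantive idea that your outline is missing.
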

This improves\footnote{After this paper was written, a result of Fässler and Orponen appeared which is better than Theorem~\ref{mainthm} for $\dim A > 7/3$ \cite{fasslerorponen}.} the known lower bound in the range $1< \dim A < 7/2$. If $\dim A \geq 7/2$, then the lower bound $\dim P_{\mathbb{V}_{\theta}^{\perp}}(A) \geq 2\dim A - 5$ from \cite{balogh} is better than Theorem~\ref{mainthm} and holds for every $\theta \in [0, \pi)$. A special case of the lower bound in Theorem~\ref{mainthm} was proved in \cite[Theorem~7.10]{balogh} for sets contained in a vertical subgroup. 

The proof of Theorem~\ref{mainthm} uses the Euclidean Fourier transform. An approach to Hausdorff dimension via the (non-Euclidean) group Fourier transform was developed by Rom\'{a}n-Garc\'{i}a \cite{romangarcia}, who proved a group Fourier-analytic formula for the energy of a measure, via the group Fourier transform of the Korányi kernels $\left\lVert \cdot \right\rVert_{\mathbb{H}}^{-s}$. Unlike the group Fourier transform case, the Euclidean Fourier transforms of the Korányi kernels seem to be unexplored. In Lemma~\ref{koranyipotential}, it is shown that if $s \in (1,3)$, then
\[ 0 <  \widehat{f_s} \lesssim f_{3-s}, \]
where $f_s(x,t) = (x^4 + t^2)^{-s/4}$ for $(x,t) \in \mathbb{R}^2$. This seems to be a partial analogue of the formula $\widehat{k_s} = c_{n,s}k_{n-s}$ for the Riesz kernels $k_s(x) = \left\lvert x\right\rvert^{-s}$ on $\mathbb{R}^n$. Only the case $s \in (1,2)$ of the inequality $\widehat{f_s} \lesssim f_{3-s}$ is used in the proof of Theorem~\ref{mainthm}.

The (Euclidean) Fourier transform of a compactly supported finite Borel measure $\mu$ on $\mathbb{R}^n$ is a locally Lipschitz function, defined by 
\[ \widehat{\mu}(\xi) = \int e^{-2\pi i \langle x, \xi \rangle } \, d\mu(x), \qquad \xi \in \mathbb{R}^n. \]
Frostman's lemma will be used throughout, which states that if $A$ is an analytic (Suslin) subset of a complete separable metric space $(X,d)$, then 
\begin{align*}  \dim A &= \sup\{ s \geq 0: \exists \mu \in \mathcal{M}(A) \text{ with } I_s(\mu) < \infty \} \\
&=  \sup\{ s \geq 0: \exists \mu \in \mathcal{M}(A) \text{ with } c_s(\mu) < \infty \}, \end{align*}
where
\[ I_s(\mu) = \int \int d(x,y)^{-s} \, d\mu(x) \, d\mu(y), \]
\[ c_s(\mu) = \sup_{\substack{ x \in X \\ r >0} } \frac{ \mu (B(x,r) )}{r^s}, \]
and $\mathcal{M}(A)$ is the set of nonzero finite Borel measures compactly supported on $A$. In particular, Frostman's lemma holds for Borel sets, since Borel sets are analytic. For an introduction to energies $I_s(\mu)$ see \cite[Chapter~8]{mattila2}, and see e.g.~\cite[Appendix B]{bishop} for a proof of Frostman's lemma in the general case. See~\cite{kaufman} for the first application of energies to projections. 

Section~\ref{bessel} contains most of the background on modified Bessel functions needed in Section~\ref{mainsec}, and Section~\ref{mainsec} contains the proofs of the main results. Section~\ref{remark} has some remarks and further questions. 

\section{Background on modified Bessel functions} 
\label{bessel}

Define the modified Bessel function of the first kind, of order $\nu$, by
\begin{equation} \label{besseldef} I_{\nu}(z) = \sum_{n=0}^{\infty} \frac{ (z/2)^{\nu + 2n } }{n! \Gamma(n+\nu +1) }, \end{equation}
for all $z \in \mathbb{C} \setminus \{0\}$ when $\nu \in \mathbb{C} \setminus \mathbb{Z}$, and for all $z \in \mathbb{C}$ when $\nu \in \mathbb{Z}$. To make $I_{\nu}$ a single-valued function, the function $z^{\nu}$ is defined to be $e^{ \nu \log z}$ where $-\pi < \arg z \leq \pi$, unless mentioned otherwise. By convention the sum in \eqref{besseldef} starts at $-\nu$ when $\nu$ is a negative integer. Then $I_{\nu}$ is an entire function when $\nu \in \mathbb{Z}$, and is analytic on $\mathbb{C} \setminus (-\infty, 0]$ when $\nu \in \mathbb{C} \setminus \mathbb{Z}$. 

Define the modified Bessel function of the second kind, of order $\nu$, by 
\[  K_{\nu}(z) = \frac{\pi}{2 \sin (\nu \pi ) } \left( I_{-\nu}(z) - I_{\nu}(z) \right), \]
when $\nu \in \mathbb{C} \setminus \mathbb{Z}$, and for any $n \in \mathbb{Z}$, define
\begin{equation} \label{integerK} K_{n}(z) = \lim_{\nu \to n} K_{\nu}(z).  \end{equation}
For any $\nu \in \mathbb{C}$, the domain of $K_{\nu}$ is $\mathbb{C} \setminus \{0\}$, and $K_{\nu}$ is analytic on $\mathbb{C} \setminus (-\infty, 0]$. For any fixed $z \in \mathbb{C} \setminus \{0\}$, the limit in \eqref{integerK} exists since $I_{\nu}(z)$ is an entire function of $\nu$, and so the limit in \eqref{integerK} can be expressed as a difference of partial derivatives with respect to $\nu$. For fixed nonzero $z$, the function $K_{\nu}(z)$ is continuous at $\nu = n$, for any $n \in \mathbb{Z}$. Finally, the definition implies that $K_{\nu} = K_{-\nu}$ for all $\nu \in \mathbb{C}$. 
\begin{proposition}[{\cite[p.~79]{watson}}] \label{derivativeidentity} For any $\nu \in \mathbb{C}$ and $z \in \mathbb{C} \setminus (-\infty, 0 ]$,
\[ \frac{d}{dz} \left[ z^{\nu} K_{\nu}(z) \right] = -z^{\nu} K_{\nu-1}(z), \]
and
\[ \frac{d}{dz} \left[ z^{-\nu} K_{\nu}(z) \right] = -z^{-\nu} K_{\nu + 1}(z). \] \end{proposition}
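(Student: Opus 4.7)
The plan is to reduce everything to the series definition of $I_{\nu}$, derive the analogous identities for $I_{\nu}$ by differentiating the series term by term, and then combine with the definition of $K_{\nu}$.

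\textbf{Step 1: Analogous identities for $I_{\nu}$.} From \eqref{besseldef},
\[ z^{\nu} I_{\nu}(z) = \sum_{n=0}^{\infty} \frac{ z^{2\nu+2n} }{ 2^{\nu+2n} n! \, \Gamma(n+\nu+1) }, \]
and differentiating term by term (the series converges uniformly on compact subsets of $\mathbb{C} \setminus (-\infty,0]$) gives
\[ \frac{d}{dz}\bigl[z^{\nu} I_{\nu}(z)\bigr] = \sum_{n=0}^{\infty} \frac{ z^{2\nu+2n-1} }{ 2^{\nu+2n-1} n! \, \Gamma(n+\nu) } = z^{\nu} I_{\nu-1}(z). \]
Similarly, starting from $z^{-\nu} I_{\nu}(z) = \sum_{n=0}^{\infty} \frac{z^{2n}}{2^{\nu+2n} n!\,\Gamma(n+\nu+1)}$ and reindexing $m=n-1$ after differentiation yields $\frac{d}{dz}[z^{-\nu} I_{\nu}(z)] = z^{-\nu} I_{\nu+1}(z)$.

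\textbf{Step 2: Pass from $I_{\nu}$ to $K_{\nu}$ for non-integer $\nu$.} Using the definition $K_{\nu} = \frac{\pi}{2\sin(\nu\pi)}(I_{-\nu} - I_{\nu})$ and applying the first identity from Step 1 with $\nu$ replaced by $-\nu$ (to handle $z^{\nu} I_{-\nu}(z) = z^{-(-\nu)} I_{-\nu}(z)$, which has derivative $z^{\nu} I_{1-\nu}(z)$), together with the identity from Step 1 applied to $z^{\nu} I_{\nu}(z)$, gives
\[ \frac{d}{dz}\bigl[z^{\nu} K_{\nu}(z)\bigr] = \frac{\pi z^{\nu}}{2\sin(\nu\pi)} \bigl( I_{-(\nu-1)}(z) - I_{\nu-1}(z) \bigr). \]
Since $\sin((\nu-1)\pi) = -\sin(\nu\pi)$, the right-hand side equals $-z^{\nu} K_{\nu-1}(z)$. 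The second identity is symmetric: differentiate $z^{-\nu} K_{\nu}(z)$ the same way using the second identity from Step 1 and $\sin((\nu+1)\pi) = -\sin(\nu\pi)$.

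\textbf{Step 3: Integer $\nu$.} For any fixed $z \in \mathbb{C} \setminus (-\infty,0]$, the function $\nu \mapsto z^{\nu} K_{\nu}(z)$ is continuous, and in fact analytic, in $\nu$ on a neighbourhood of each integer, because $I_{\nu}(z)$ is entire in $\nu$ and the apparent singularity of $K_{\nu}$ at integer $\nu$ is removable by \eqref{integerK}. The derivative $\frac{d}{dz}[z^{\nu} K_{\nu}(z)]$ is also jointly analytic in $(\nu,z)$ on a neighbourhood of $\{n\} \times (\mathbb{C}\setminus(-\infty,0])$ (e.g. by Cauchy's integral formula in $z$ applied to the continuous family $z^{\nu} K_{\nu}(z)$), so the identity proved in Step 2 for non-integer $\nu$ passes to the limit $\nu \to n$.

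No step here is a serious obstacle; the only mildly delicate point is justifying the continuity/analyticity in $\nu$ at integer values, which is handled by the removable-singularity structure already noted in the discussion preceding the proposition.
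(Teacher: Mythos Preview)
The paper does not give its own proof of this proposition; it is stated with a citation to Watson's treatise and used as a black box. Your argument is the standard derivation (essentially what one finds in Watson): establish the recurrence for $I_{\nu}$ by term-by-term differentiation of the series, transfer it to $K_{\nu}$ via the defining linear combination and the sign change $\sin((\nu\pm 1)\pi)=-\sin(\nu\pi)$, and then pass to integer $\nu$ by continuity using that $K_{\nu}(z)$ is entire in $\nu$ for fixed $z$. All three steps are correct as written; the only point worth a word of care is Step~3, where you need joint analyticity of $z^{\nu}K_{\nu}(z)$ in $(\nu,z)$ so that the $z$-derivative is also continuous in $\nu$, and your Cauchy-integral remark handles this.
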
 

The third formula for $K_{\nu}$ in the theorem below is known as Basset's integral formula, and a proof of the theorem can be found in \cite[p.~172]{watson}. A more direct proof is outlined in \cite[p.~384]{watsonwhittaker}, though the definition of $K_{\nu}$ given in \cite{watsonwhittaker} has an extra factor of $\cos( \pi \nu)$ compared to the (now) standard definition. 
\begin{theorem}[{\cite[p.~172]{watson}}] \label{basset} If $\re \nu > -1/2$ and $\re z > 0$, then 
\begin{align*} K_{\nu}(z) &= \frac{(z/2)^{\nu} \sqrt{\pi} }{ \Gamma\left( {\nu}+\frac{1}{2} \right)} \int_0^{\infty} e^{-z \cosh \phi} (\sinh \phi)^{2{\nu}} \, d\phi \\
&= \frac{(z/2)^{\nu}\sqrt{\pi}   }{ \Gamma\left( {\nu}+\frac{1}{2} \right)} \int_1^{\infty} e^{-zt} (t^2-1)^{{\nu}- \frac{1}{2} } \, dt \\
&= \frac{  (2z)^{\nu} \Gamma\left( {\nu} + \frac{1}{2} \right) }{2\sqrt{\pi}} \int_{-\infty}^{\infty} e^{-i u } (z^2+u^2)^{-{\nu} - \frac{1}{2}} \, du, \end{align*}
where the last integral is an improper Riemann integral. In particular (due to either of the first two formulas), $K_{\nu}(x)$ is strictly positive for all $x>0$ and $\nu \in \mathbb{R}$.
  \end{theorem}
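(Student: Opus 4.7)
The plan is to establish the three representations in sequence, viewing the second as a reparametrization of the first, and the third (Basset's formula) as the substantive new content, to be deduced from a standard integral representation of $K_\nu$ by a Fourier-analytic computation.

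The equivalence of the first two formulas is purely formal: the substitution $t = \cosh\phi$ gives $dt = \sinh\phi\,d\phi$ and $(\sinh\phi)^{2\nu}\,d\phi = (t^2-1)^{\nu-1/2}\,dt$. Convergence at $t=1$ requires $\re\nu > -1/2$ and at infinity requires $\re z > 0$; both hold by hypothesis.

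To identify this common value with $K_\nu$, I would first establish the Poisson-type representation
\[
I_\nu(z) = \frac{(z/2)^\nu}{\sqrt{\pi}\,\Gamma(\nu+\tfrac{1}{2})} \int_{-1}^{1} e^{-zt}(1-t^2)^{\nu-1/2}\,dt
\]
by expanding $e^{-zt}$ as a power series and integrating term by term against $(1-t^2)^{\nu-1/2}$ using the Beta integral, matching the result with \eqref{besseldef}. Substituting the analogous identities for $I_\nu$ and $I_{-\nu}$ into $K_\nu = \pi(I_{-\nu}-I_\nu)/(2\sin\pi\nu)$ and deforming the contour from $[-1,1]$ around the branch points $\pm 1$ onto the ray $(1,\infty)$ then produces formula (2). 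A self-contained alternative is to denote the right-hand side of formula (2) by $L_\nu$, verify the recurrence of Proposition~\ref{derivativeidentity} via integration by parts, and match the Laplace-method asymptotic $L_\nu(z) \sim \sqrt{\pi/(2z)}\,e^{-z}$ as $z \to \infty$ with that of $K_\nu$, forcing $L_\nu = K_\nu$.

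For Basset's formula, I would apply the Gamma identity $(z^2+u^2)^{-\nu-1/2} = \Gamma(\nu+\tfrac{1}{2})^{-1}\int_0^\infty s^{\nu-1/2}e^{-(z^2+u^2)s}\,ds$, interchange the $u$- and $s$-integrals, and evaluate the Gaussian $\int_{-\infty}^\infty e^{-iu-u^2 s}\,du = \sqrt{\pi/s}\,e^{-1/(4s)}$. After the substitution $s = v/(2z)$ the right-hand side of formula (3) collapses to $\tfrac{1}{2}\int_0^\infty v^{\nu-1}\exp\!\bigl(-\tfrac{z}{2}(v+v^{-1})\bigr)\,dv$, Schläfli's integral representation of $K_\nu$ (itself verifiable by expanding $e^{-z/(2v)}$ in a power series, evaluating each resulting $v$-integral as a Gamma function, and matching the defining series of $K_\nu$). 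The main obstacle will be justifying the Fubini interchange, since for $\re\nu \le 1/2$ the $u$-integral in formula (3) converges only conditionally (as the statement itself flags). My plan is to first prove the identity on $\re\nu > 1/2$, where absolute convergence holds throughout, and extend to $\re\nu > -1/2$ by analytic continuation in $\nu$; both sides are analytic in $\nu$ on that strip, with the right-hand side converging uniformly on compacta after one integration by parts in $u$ to gain additional decay.
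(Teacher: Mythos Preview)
The paper does not supply its own proof of this theorem: it is quoted from Watson (with an alternative proof in Whittaker--Watson) and used as a black box, so there is no in-paper argument against which to compare your proposal. Your outline is a standard and essentially correct route to these identities; the Gamma-kernel/Fubini derivation of the third formula, followed by analytic continuation in $\nu$ from $\re\nu>1/2$ to $\re\nu>-1/2$, is exactly how one usually handles the conditional convergence, and the $t=\cosh\phi$ substitution linking the first two formulas is immediate.

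Two small points to tighten. First, your parenthetical justification of Schl\"afli's integral by expanding $e^{-z/(2v)}$ in a power series and integrating termwise against $v^{\nu-1}e^{-zv/2}$ does not work as stated: the $n$th term $\int_0^\infty v^{\nu-1-n}e^{-zv/2}\,dv$ diverges once $n\geq\re\nu$. A cleaner verification is to substitute $v=e^{\phi}$ in Schl\"afli's integral to get $\tfrac12\int_{-\infty}^{\infty} e^{\nu\phi-z\cosh\phi}\,d\phi$, which is the standard exponential representation of $K_\nu$ and connects directly to your formula~(1) after symmetrising. Second, in your first proposed identification of formula~(2) with $K_\nu$ via Poisson representations of $I_{\pm\nu}$, note that the Poisson integral for $I_{-\nu}$ requires $\re(-\nu)>-1/2$, so this direct route only covers the strip $-1/2<\re\nu<1/2$; you would again need analytic continuation (or your alternative recurrence-plus-asymptotics argument, which is cleaner) to reach the full range $\re\nu>-1/2$.
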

One corollary of the preceding theorem is the (known) limit for $K_{\nu}$ stated in Corollary~\ref{asymptotics} below. This limit follows from taking the first term in the more general asymptotic series expansion for $K_{\nu}$ (see e.g.~\cite[p.~202]{watson}), but the limit can also be calculated easily from Theorem~\ref{basset}.
\begin{corollary} \label{asymptotics} For any $\nu \in \mathbb{C}$,
\[ \lim_{x \to +\infty} \frac{K_{\nu}(x)}{\left( \frac{\pi}{2x} \right)^{1/2}e^{-x}} = 1.  \]
 \end{corollary}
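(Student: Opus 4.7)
The plan is to derive the asymptotic directly from Basset's integral formula (the second expression in Theorem~\ref{basset}) by a Laplace-type substitution that concentrates the integral near $t = 1$ as $x \to +\infty$. Since the identity $K_{\nu} = K_{-\nu}$ holds on the whole complex $\nu$-plane, one may replace $\nu$ by $-\nu$ if necessary, so it suffices to treat the case $\re \nu \geq 0$; in particular $\re \nu > -1/2$, so Basset's second formula is available on $(0,\infty)$.

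Starting from
\[ K_\nu(x) = \frac{(x/2)^\nu \sqrt{\pi}}{\Gamma(\nu + 1/2)} \int_1^\infty e^{-xt}(t^2 - 1)^{\nu - 1/2} \, dt, \]
I would substitute $t = 1 + s/x$, use the factorisation $t^2 - 1 = (2s/x)(1 + s/(2x))$, and simplify the powers of $x/2$ via $(x/2)^\nu (2/x)^{\nu - 1/2} = (x/2)^{1/2}$ together with $(x/2)^{1/2}/x = (2x)^{-1/2}$. This turns the identity above into
\[ K_\nu(x) = \sqrt{\frac{\pi}{2x}}\, e^{-x}\, \frac{1}{\Gamma(\nu + 1/2)} \int_0^\infty e^{-s}\, s^{\nu - 1/2} \left(1 + \frac{s}{2x}\right)^{\nu - 1/2} ds, \]
so the corollary reduces to showing
\[ \lim_{x \to \infty} \int_0^\infty e^{-s}\, s^{\nu - 1/2} \left(1 + \frac{s}{2x}\right)^{\nu - 1/2} ds = \int_0^\infty e^{-s}\, s^{\nu - 1/2}\, ds = \Gamma(\nu + 1/2). \]

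The integrand converges pointwise, so the only real step is a dominated convergence argument. For $x \geq 1$, the inequality $1 + s/(2x) \leq 1 + s/2$ and $|z^{\nu - 1/2}| = z^{\re \nu - 1/2}$ for $z > 0$ (principal branch) give
\[ \left| \left(1 + \frac{s}{2x}\right)^{\nu - 1/2} \right| = \left(1 + \frac{s}{2x}\right)^{\re \nu - 1/2} \leq \max\bigl\{1,\, (1 + s/2)^{\re \nu - 1/2}\bigr\}, \]
and multiplying by $|e^{-s} s^{\nu - 1/2}| = e^{-s}\, s^{\re \nu - 1/2}$ produces an $L^1((0, \infty))$ majorant: integrability near $0$ uses $\re \nu - 1/2 > -1$, while integrability near $\infty$ is guaranteed by the factor $e^{-s}$. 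Dominated convergence then delivers the limit, and hence the corollary. The main obstacle I anticipate is purely bookkeeping: tracking the complex powers under the principal-branch convention so that $(x/2)^\nu (2/x)^{\nu - 1/2} = (x/2)^{1/2}$ is justified on the positive real axis, and splitting the majorant into the two cases $\re \nu \geq 1/2$ and $-1/2 < \re \nu < 1/2$; no finer asymptotic theory of Bessel functions is required.
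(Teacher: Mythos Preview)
Your argument is correct and is precisely the ``easy calculation from Theorem~\ref{basset}'' that the paper alludes to but does not write out: the paper gives no detailed proof of this corollary, only citing the first term of Watson's asymptotic expansion and remarking that Basset's formula yields it directly. Your substitution $t = 1 + s/x$ followed by dominated convergence is exactly the natural way to make that remark rigorous, and the reduction to $\re \nu \geq 0$ via $K_{\nu} = K_{-\nu}$ cleanly avoids any issue with poles of $\Gamma(\nu + 1/2)$ or with the hypothesis $\re \nu > -1/2$ in Theorem~\ref{basset}.
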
	
Another (known) corollary of Theorem~\ref{basset} is that the singularity of $K_0$ at the origin is of logarithmic type, which follows from the series for $K_0$ (see~\cite[p.~80]{watson}), but a short proof is included below. 
\begin{corollary} \label{besselzero} For all $x>0$,
\[ K_0(x) \leq 1+ \log 2 +  \left\lvert \log x\right\rvert. \]  \end{corollary}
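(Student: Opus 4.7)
The plan is to apply Basset's integral formula (Theorem~\ref{basset}) with $\nu = 0$, which gives
\[ K_0(x) = \int_0^{\infty} e^{-x \cosh \phi}\, d\phi \]
for all $x>0$, and then bound this via the elementary inequality $\cosh \phi \geq e^{\phi}/2$. This yields
\[ K_0(x) \leq \int_0^{\infty} e^{-xe^{\phi}/2}\, d\phi, \]
and the substitution $u = xe^{\phi}/2$, $du = u\, d\phi$ converts the right-hand side to $\int_{x/2}^{\infty} e^{-u} u^{-1}\, du$.

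From here I would split the integral at $u = 1$. On $[1,\infty)$ the integrand is dominated by $e^{-u}$, whose integral is $e^{-1}<1$. On any subinterval of $(0,1]$ the factor $e^{-u}$ is at most $1$, so the contribution of $[\min\{x/2,1\},1]$ is at most $\log(2/x) = \log 2 - \log x$ when $x<2$, and is empty when $x \geq 2$. Adding the two pieces gives the bound $\log 2 - \log x + 1$ in the range $0<x<2$, and the bound $1$ in the range $x \geq 2$. In both cases this is at most $1 + \log 2 + \lvert \log x\rvert$, since $-\log x \leq \lvert \log x\rvert$.

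There is no real obstacle here: the only judgement call is the choice of split point $u=1$ (equivalently $\phi_0 = \log(2/x)$ in the $\phi$-variable), which is chosen precisely to balance the logarithmic divergence at the origin against the exponential decay at infinity. Everything else is a one-line estimate, so the corollary is essentially an immediate consequence of Basset's formula together with the bound $\cosh \phi \geq e^{\phi}/2$.
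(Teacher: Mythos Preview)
Your proof is correct and follows essentially the same route as the paper's: Basset's formula at $\nu=0$, the bound $\cosh\phi \geq e^{\phi}/2$, the substitution $u=(x/2)e^{\phi}$, and then the estimate $\int_{x/2}^{\infty} e^{-u}u^{-1}\,du \leq 1+\log 2+|\log x|$. The only difference is that you spell out the final inequality by splitting at $u=1$, whereas the paper states it in one line; the content is the same.
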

\begin{proof} By Theorem~\ref{basset}, 
\[ K_0(x) = \int_0^{\infty} e^{-x \cosh t} \, dt  \leq \int_0^{\infty} e^{-(x/2) e^t} \, dt. \]
The change of variables $s = (x/2)e^t$ gives
\[ K_0(x) \leq \int_{x/2}^{\infty} \frac{e^{-s}}{s} \, ds \leq 1+ \log 2 +  \left\lvert \log x\right\rvert. \qedhere \]   \end{proof}

\section{Proof of lemmas and the main theorem}
\label{mainsec}

\begin{sloppypar} The following result of Tuck from \cite{tuck} gives a sufficient condition for the positivity of Fourier transforms. The proof given here is essentially the same as the one from \cite{tuck}, but is included for completeness. 
\begin{lemma}[{\cite{tuck}}] \label{tucklemma} Let $u: (0, \infty) \to \mathbb{R}$ be a differentiable convex function which is not identically zero. Assume that 
\[  \lim_{x \to +\infty} u(x) = \lim_{x \to +\infty} u'(x)  = \lim_{x \to 0^+} x u(x) =0. \]
Then 
\begin{equation} \label{improperriemann} \int_0^{\infty} \cos (x \xi) u(x) \, dx >0, \end{equation}
for any $\xi \in \mathbb{R}$ such that the two-sided improper Riemann integral in \eqref{improperriemann} converges.  \end{lemma}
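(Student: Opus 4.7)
The plan is to integrate by parts twice in $x$, shifting both derivatives from $\cos(x\xi)$ onto $u$. First observe that a convex function differentiable on an open interval is automatically $C^1$, because the one-sided derivatives of a convex function are monotone and agree wherever the function is differentiable. Hence $u'$ is continuous and nondecreasing, and combined with $u'(\infty) = 0$ this forces $u' \leq 0$; together with $u(\infty) = 0$ this also gives $u \geq 0$, nonincreasing. For $\xi \neq 0$ I will use the antiderivative $h(x) = (1 - \cos(x\xi))/\xi^2$ of $\cos(x\xi)$, chosen to vanish at $0$ and to be bounded. The choice $h \geq 0$ is what will eventually produce the claimed positivity.

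Next I carry out the double integration by parts on a truncation $[\epsilon, R]$. The second integration is a Riemann--Stieltjes step against the positive Borel measure $du'$ generated by the continuous nondecreasing function $u'$, and yields
\begin{align*}
\int_\epsilon^R \cos(x\xi) u(x)\,dx
&= \bigl[h'(x) u(x) - h(x) u'(x)\bigr]_\epsilon^R + \int_\epsilon^R h(x)\,du'(x).
\end{align*}
The boundary terms at $R$ vanish as $R \to \infty$ because $u(R), u'(R) \to 0$ and $h, h'$ are bounded. At $\epsilon$, the term $h'(\epsilon)u(\epsilon) = O(\epsilon u(\epsilon))$ vanishes by the hypothesis $xu(x) \to 0$. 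The harder term is $h(\epsilon)u'(\epsilon) = O(\epsilon^2 u'(\epsilon))$, for which I extract a pointwise bound on $u'$ near $0$ from the hypothesis on $xu(x)$ using monotonicity of $-u' \geq 0$:
\[
\tfrac{\epsilon}{2}\bigl(-u'(\epsilon)\bigr) \leq \int_{\epsilon/2}^\epsilon \bigl(-u'(t)\bigr)\,dt = u(\epsilon/2) - u(\epsilon) \leq u(\epsilon/2),
\]
so $\epsilon^2|u'(\epsilon)| \leq 2\epsilon u(\epsilon/2) \to 0$. This is the main technical obstacle, since the hypotheses give no direct bound on $u'$ at the origin. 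Letting $\epsilon \to 0^+$ and $R \to \infty$ then gives
\[
\int_0^\infty \cos(x\xi)u(x)\,dx = \frac{1}{\xi^2}\int_0^\infty \bigl(1 - \cos(x\xi)\bigr)\,du'(x);
\]
the right-hand side is automatically finite because the left-hand side is finite by hypothesis and the partial Stieltjes integrals on the right are monotone nondecreasing in the domain.

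Finally I argue strict positivity. The integrand on the right is non-negative, so only strictness remains. If $u'$ were constant on $(0,\infty)$, then $u'(\infty)=0$ would force $u' \equiv 0$ and hence $u \equiv 0$, contradicting the hypothesis; so $u'$ strictly increases on some subinterval, and $du'$ carries positive mass there. Since $u'$ is continuous, the Stieltjes measure $du'$ is non-atomic, so the countable zero set $\{2\pi k/\xi : k \in \mathbb{Z}\}$ of $1 - \cos(\cdot\,\xi)$ has $du'$-measure zero; hence $1 - \cos(x\xi) > 0$ holds $du'$-a.e.\ on a set of positive $du'$-measure, and the integral is strictly positive. The case $\xi = 0$ is immediate, since $u \geq 0$ is continuous and not identically zero.
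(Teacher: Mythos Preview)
Your proof is correct and takes a genuinely different route from the paper's. The paper integrates by parts once (after scaling to $\xi=1$) to reach $-\int_0^\infty \sin(x)\,u'(x)\,dx$, and then argues positivity combinatorially by pairing the intervals $[2k\pi,(2k+1)\pi]$ with $[(2k+1)\pi,(2k+2)\pi]$, using only that $-u'$ is continuous and weakly decreasing. You instead integrate by parts twice, choosing the second antiderivative $h(x)=(1-\cos(x\xi))/\xi^2$ so that the result is $\int_0^\infty h\,du'$ against the positive Stieltjes measure $du'$; positivity then drops out from $h\geq 0$ and the non-atomicity of $du'$. Your approach is cleaner conceptually, but it requires the additional boundary estimate $\epsilon^2|u'(\epsilon)|\to 0$, which you correctly extract from the hypothesis $xu(x)\to 0$ via monotonicity of $-u'$. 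The paper's single integration by parts avoids needing any control on $u'$ near the origin, at the price of the hands-on period-pairing argument for strict positivity.
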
 
\begin{proof} That $u$ is convex and differentiable implies that $u'$ is weakly increasing on $(0, \infty)$, and therefore continuous by Darboux's theorem. The condition $\lim_{x \to +\infty} u'(x)=0$ implies that $u'(x) \leq 0$ for all $x>0$, and the condition $\lim_{x \to +\infty} u(x) =0$ then implies that $u(x) \geq 0$ for all $x>0$. Let $\xi \in \mathbb{R}$ be such that the integral in \eqref{improperriemann} converges. By scaling and by symmetry it may be assumed that $\xi = 1$ (the case $\xi = 0$ is trivial). Then
\[ \int_0^{\infty} \cos(x) u(x) \, dx = - \int_0^{\infty} \sin(x) u'(x) \, dx, \]
where the right-hand side is again a convergent two-sided improper Riemann integral. This follows by integrating by parts on $[a,b]$ with $0 < a < b < \infty$, and letting $a \to 0^+$ and $b \to \infty$, using the conditions $\lim_{x \to 0^+} x u(x) =0$ and $\lim_{x \to +\infty} u(x)=0$ to eliminate the boundary terms. It suffices to show that
\[ \int_0^{\infty} \sin(x) v(x) \, dx >0, \]
whenever $v$ is a weakly decreasing continuous function on $(0, \infty)$ with $\lim_{x \to +\infty} v(x) = 0$, such that $v$ is not identically zero and such that the above two-sided improper Riemann integral converges. By the assumption of convergence, the integral above can be written as 
\[ \sum_{k=0}^{\infty} \int_{2k\pi }^{(2k+1)\pi } \sin(x) [v(x) - v(x + \pi )] \, dx. \]
It suffices to find a single summand which is strictly positive. The function $v$ is non-constant since $v$ is not identically zero and $\lim_{x \to \infty} v(x) = 0$. Since $v$ is weakly decreasing and non-constant, there must exist a non-negative integer $k_0$ such that 
\[ v(2k_0 \pi) - v((2k_0+2) \pi ) >0; \]
where the right limit is taken for $v(0)$ and allowed to be $+\infty$ if $k_0=0$. By continuity of $v$, it follows that there exists $x_0 \in (2k_0\pi, (2k_0+1)\pi)$ such that 
\[ v(x_0) - v(x_0 + \pi )>0. \]
Again by continuity of $v$, this yields
\begin{multline*} \sum_{k=0}^{\infty} \int_{2k\pi }^{(2k+1)\pi } \sin(x) [v(x) - v(x + \pi )] \, dx \\
\geq \int_{2k_0\pi}^{(2k_0+1)\pi} \sin(x) [ v(x) - v(x + \pi) ] \, dx >0. \end{multline*}
 This finishes the proof. \end{proof} \end{sloppypar}

The following lemma is an inequality for the (2-dimensional) Euclidean Fourier transforms of (2-dimensional) Korányi kernels.

\begin{lemma} \label{koranyipotential} For $s \in (0,3)$, let
\[ f_s(x,t) = \frac{1}{(x^4 + t^2)^{s/4} }, \qquad (x,t) \in \mathbb{R}^2. \]
Then $\phi \mapsto \int \phi f_s$ defines a tempered distribution $f_s \in \mathcal{S}'(\mathbb{R}^2)$, and if $s \in (1,3)$ then the Euclidean Fourier transform of $f_s$ is a locally integrable function which satisfies
\begin{equation} \label{sthreeminuss} 0 < \widehat{f_s}(\xi_1, \xi_2) \leq C_s f_{3-s}(\xi_1, \xi_2),  \end{equation}
for some positive constant $C_s$ depending only on $s$. Moreover, if $s \in (1,3)$ then
\begin{equation} \label{fourierfunction} \widehat{f_s}(\xi_1, \xi_2) = \frac{ 2\pi^{s/4} \left\lvert \xi_2\right\rvert^{\frac{s-2}{4}}}{\Gamma(s/4)} \int_{\mathbb{R}} e^{-2\pi i x \xi_1} \left\lvert x\right\rvert^{1-\frac{s}{2} } K_{\frac{s-2}{4}}\left( 2\pi \lvert \xi_2 \rvert x^2 \right) \, dx. \end{equation}
\end{lemma}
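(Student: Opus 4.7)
The plan is to establish the three claims—tempered distribution, the identity \eqref{fourierfunction}, and the bound $0 < \widehat{f_s} \le C_s f_{3-s}$—in sequence, deriving the latter two from Basset's formula and Tuck's lemma. For $f_s \in \mathcal{S}'(\mathbb{R}^2)$ with $s \in (0,3)$, I would verify local integrability by the change of variables $t = x^2 u$: the integral $\int_{B(0,1)} f_s$ reduces to $\int_0^1 |x|^{2-s}\int_{|u|\le x^{-2}}(1+u^2)^{-s/4}\,du\,dx$, finite iff $s < 3$; polynomial decay at infinity is trivial. For \eqref{fourierfunction}, I would first treat $s \in (2,3)$, where $f_s(x,\cdot) \in L^1(\mathbb{R})$ for $x \ne 0$. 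Applying Theorem~\ref{basset} with $\nu = (s-2)/4$ and $z = 2\pi|\xi_2|x^2$ (after rescaling the $t$-variable) yields the partial Fourier transform in $t$,
\[ \int_{\mathbb{R}} e^{-2\pi i t\xi_2}(x^4+t^2)^{-s/4}\,dt = \frac{2\pi^{s/4}|\xi_2|^{(s-2)/4}}{\Gamma(s/4)}|x|^{1-s/2}K_{(s-2)/4}(2\pi|\xi_2|x^2). \]
By Corollary~\ref{asymptotics} and the series of $K_\nu$ near $0$, this is $L^1$ in $x$ for $\xi_2 \ne 0$, so a Fourier transform in $x$ yields \eqref{fourierfunction}; Fubini is legitimate after pairing against a Schwartz function. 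For $s \in (1,2]$, I would approximate $f_s$ by $f_{s,\varepsilon}(x,t) = ((x^2+\varepsilon)^2 + t^2)^{-s/4}$ (for which the analogous identity follows from absolute integrability) and pass $\varepsilon \to 0$ in $\mathcal{S}'$.

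For positivity, fix $\xi_2 \ne 0$, set $\alpha = 2\pi|\xi_2|$, $\nu = (s-2)/4 \in (-1/4, 1/4)$, and $u(x) = x^{-2\nu}K_\nu(\alpha x^2)$ for $x > 0$ (so $|x|^{1-s/2} = x^{-2\nu}$). Since $u$ is even, the integral in \eqref{fourierfunction} equals $2\int_0^{\infty}\cos(2\pi x\xi_1) u(x)\,dx$; I would apply Tuck's lemma (Lemma~\ref{tucklemma}). The limit conditions at $0$ and $\infty$ follow from Corollary~\ref{asymptotics} and the series of $K_\nu$. For convexity, write $u(x) = \alpha^\nu h(\alpha x^2)$ with $h(z) = z^{-\nu}K_\nu(z)$; Proposition~\ref{derivativeidentity} gives $h'(z) = -z^{-\nu}K_{\nu+1}(z)$, and a second application (to $z^{-(\nu+1)}K_{\nu+1}(z)$) gives $h''(z) = -z^{-(\nu+1)}K_{\nu+1}(z) + z^{-\nu}K_{\nu+2}(z)$. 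Combining with the three-term recurrence $K_{\nu+2}(z) = K_\nu(z) + \tfrac{2(\nu+1)}{z}K_{\nu+1}(z)$ yields, after simplification,
\[ u''(x) = 2\alpha^{\nu+1}z^{-\nu}\bigl[2zK_\nu(z) + (4\nu+1)K_{\nu+1}(z)\bigr], \qquad z = \alpha x^2, \]
which is strictly positive since $4\nu + 1 = s - 1 > 0$ by hypothesis and $K_\nu, K_{\nu+1}$ are positive on $(0,\infty)$ by Theorem~\ref{basset}. Lemma~\ref{tucklemma} then gives $\widehat{f_s}(\xi_1,\xi_2) > 0$ for every $\xi_1$.

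For the upper bound, exploit parabolic homogeneity: $f_s(\lambda x, \lambda^2 t) = \lambda^{-s} f_s$ implies $\widehat{f_s}(\lambda\xi_1, \lambda^2\xi_2) = \lambda^{s-3}\widehat{f_s}(\xi_1,\xi_2)$, matching the identical homogeneity of $f_{3-s}$. Choosing $\lambda = (\xi_1^4+\xi_2^2)^{-1/4}$ reduces the inequality to uniform boundedness of $\widehat{f_s}$ on the compact curve $\{\xi_1^4+\xi_2^2 = 1\}$. After the substitution $y = x|\xi_2|^{1/2}$ in \eqref{fourierfunction}, this amounts to $|I(\eta)| \lesssim (1 + \eta^4)^{-(3-s)/4}$, where $\eta = \xi_1/|\xi_2|^{1/2}$ and $I(\eta) = \int_{\mathbb{R}} e^{-2\pi i y\eta}|y|^{1-s/2}K_{(s-2)/4}(2\pi y^2)\,dy$. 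Bounded $|\eta|$ is immediate from $L^1$-integrability of the integrand; for $|\eta|\to\infty$, the series of $K_\nu$ near $0$ decomposes the integrand as a Riesz-type piece $A_s|y|^{2-s}\tilde p_1(y)$ (with $\tilde p_1$ smooth, even, and inheriting exponential decay at infinity from $K_\nu$) plus a smooth rapidly decaying remainder whose Fourier transform is Schwartz. The Riesz piece, via the standard identity $\widehat{|y|^{2-s}} \asymp c|\eta|^{s-3}$ on $\mathbb{R}$, contributes the required decay $|\eta|^{s-3}$. The hardest step is this upper bound, since it requires simultaneously tracking the local behavior of $K_\nu$ near the singular points and quantitatively matching the parabolic scaling of $f_{3-s}$; the positivity reduction via Tuck's lemma, though algebraically delicate, is clean once the three-term recurrence is invoked.
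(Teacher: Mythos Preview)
Your overall strategy---Basset's formula for the partial transform in $t$, then Tuck's lemma for positivity---matches the paper's; your convexity check via the recurrence $K_{\nu+2}=K_\nu+\tfrac{2(\nu+1)}{z}K_{\nu+1}$ is correct and arguably tidier than the paper's factorization of $-u'$ as a product of positive decreasing functions.

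There is, however, a real gap in your derivation of \eqref{fourierfunction} for $s\in(1,2]$. The regularization $f_{s,\varepsilon}(x,t)=((x^2+\varepsilon)^2+t^2)^{-s/4}$ does \emph{not} achieve absolute integrability: for $s\le 2$ one still has $t\mapsto((x^2+\varepsilon)^2+t^2)^{-s/4}\sim|t|^{-s/2}\notin L^1(\mathbb{R})$, and $f_{s,\varepsilon}\notin L^1(\mathbb{R}^2)$ either, so the Fubini obstruction is exactly the same as for $f_s$ itself. The difficulty is the decay of $f_s$ in $t$ at infinity, not the singularity at $x=0$, and your regularization addresses only the latter. The paper handles this by testing against Schwartz functions cut off near $\{\xi_2=0\}$ and passing to the limit via integration by parts in the $\xi$-variables; a quicker repair of your argument would be analytic continuation in $s$, since both sides of \eqref{fourierfunction} (paired with a fixed Schwartz function) are analytic on the strip $1<\re s<3$ and, by your own argument, agree on $(2,3)$. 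For the upper bound, your scaling-plus-Riesz-extraction route is genuinely different from the paper's, which instead integrates \eqref{fourierfunction} by parts once and bounds the resulting oscillatory integral directly with the second mean value theorem. Both routes work, but note that for $s<2$ the distribution $\widehat{|y|^{2-s}}=c_s|\eta|^{s-3}$ is not locally integrable on $\mathbb{R}$, so your convolution estimate implicitly requires a finite-part interpretation (equivalently, a second integration by parts after splitting at $|y|\sim 1/|\eta|$); the sketch understates this step.
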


Before the proof, a brief heuristic explanation will be given for the appearance of $f_{3-s}$ in \eqref{sthreeminuss}. If $f_s$ were a function in $L^1(\mathbb{R}^2)$, the lemma would follow by changing variables $(x,t) \mapsto (\lambda x, \lambda^2 t)$ in the integral 
\[ \widehat{f_s}(\xi_1, \xi_2) = \int_{\mathbb{R}^2} e^{-2\pi i \langle (x,t), (\xi_1, \xi_2) \rangle} f_s(x,t) \, dx \, dt, \]
using the scaling $f_s(\lambda x, \lambda^2 t) = \lambda^{-s} f_s(x,t)$ for the particular choice $\lambda = (\xi_1^4 + \xi_2^2 )^{-1/4}$, to get
\[ \widehat{f_s}(\xi_1, \xi_2)  
= f_{3-s}(\xi_1, \xi_2) \int_{\mathbb{R}^2} e^{-2\pi i \left\langle (x,t), \left(\frac{\xi_1}{\left(\xi_1^4 + \xi_2^2\right)^{1/4}}, \frac{\xi_2}{\left(\xi_1^4 + \xi_2^2\right)^{1/2}}\right)\right\rangle} f_s(x,t) \, dx \, dt. \]
If $f_s$ were in $L^1(\mathbb{R}^2)$, the integral on the right-hand side above would be a continuous function of $(\eta_1, \eta_2) = \left(\frac{\xi_1}{\left(\xi_1^4 + \xi_2^2\right)^{1/4}}, \frac{\xi_2}{\left(\xi_1^4 + \xi_2^2\right)^{1/2}}\right)$ on the compact set $\eta_1^4 + \eta_2^2 =1$, and the inequality $\left\lvert \widehat{f_s} \right\rvert \lesssim f_{3-s}$ would follow. Unfortunately, there does not seem to be an obvious way to interpret the integral above as a bounded function of $\eta$, since $f_s$ is not integrable over $\mathbb{R}^2$. A scaling property also explains the analogous formula $\widehat{k_s} = c_{n,s} k_{n-s}$ for the Riesz kernel on $\mathbb{R}^n$, but in that case the rotation invariance of the Riesz kernel makes the formula an exact equality. The lack of symmetry between the variables in $f_s$ is the reason for integrating in the $t$-variable first in the proof below.

\begin{proof}[Proof of Lemma~\ref{koranyipotential}] The first part of the proof will establish \eqref{fourierfunction}. The idea is to apply Basset's integral formula, but some analysis is necessary to justify interchanging the order of integration.  The second part of the proof will use \eqref{fourierfunction} to prove \eqref{sthreeminuss}.

The assumption that $0 < s < 3$ implies that $f_s$ is bounded outside $B(0,1)$ and locally integrable, and $f_s$ is therefore a tempered distribution. Assume now that $1 < s < 3$. Let $\psi$ be a smooth bump function on $\mathbb{R}$ such that $\psi = 1$ on $[-1,1]$ and $\psi=0$ outside $[-2,2]$. Let $\phi \in \mathcal{S}(\mathbb{R}^2)$ and for each $\epsilon>0$ let $\phi_{\epsilon}(\xi_1, \xi_2) = (1- \psi(\xi_2/ \epsilon)) \phi(\xi_1, \xi_2)$. Then  $\left\langle f_s,  \widehat{\phi} -  \widehat{\phi_{\epsilon}} \right\rangle \to 0$ as $\epsilon \to 0$. To see this, write 
\[ f_s = f_{s,1} + f_{s,2} + f_{s,3}, \]
where 
\[ f_{s,1} = f_s \chi_{ \left\{ (x,t) \in \mathbb{R}^2: x^4 + t^2 \leq 1 \right\}}, \]
\[ f_{s,2} = f_s \chi_{\left\{ (x,t) \in \mathbb{R}^2: x^4 + t^2 > 1 \text{ and } x^2 \leq \left\lvert t\right\rvert \right\}},\]
and 
\[ f_{s,3} = f_s \chi_{\left\{ (x,t) \in \mathbb{R}^2: x^4 + t^2 > 1 \text{ and } x^2 > \left\lvert t\right\rvert \right\}}. \]
Then $\left\langle f_{s,1},  \widehat{\phi} -  \widehat{\phi_{\epsilon}}  \right\rangle \to 0$ as $\epsilon \to 0$, since $f_{s,1} \in L^1(\mathbb{R}^2)$ and $ \widehat{\phi} -  \widehat{\phi_{\epsilon}}  \to 0$ in $L^{\infty}(\mathbb{R}^2)$. 

By an integration by parts in the $\xi_2$ variable, 
\begin{multline*} \left\langle f_{s,2},  \widehat{\phi} -  \widehat{\phi_{\epsilon}}  \right\rangle =\\ \int_{\mathbb{R}^2} \frac{ f_{s,2}(x,t)}{2\pi i t} \int_{\mathbb{R}^2} e^{-2\pi i \langle (x,t), \xi \rangle } \left[ \epsilon^{-1}  \psi'(\xi_2/\epsilon) \phi( \xi) + \psi(\xi_2/\epsilon) \partial_2 \phi(\xi ) \right] \, d\xi \, dx \, dt. \end{multline*} 
The functions
\begin{equation} \label{tricky} \int_{\mathbb{R}^2} e^{-2\pi i \langle (x,t), \xi \rangle } \left[ \epsilon^{-1}  \psi'(\xi_2/\epsilon) \phi( \xi) + \psi(\xi_2/\epsilon) \partial_2 \phi(\xi ) \right] \, d\xi \end{equation}
are uniformly bounded in $L^{\infty}(\mathbb{R}^2)$, and converge to zero uniformly on compact subsets of $\mathbb{R}^2$, as $\epsilon \to 0$. The convergence to zero of the second term in \eqref{tricky} follows from the dominated convergence theorem. Convergence to zero of the first term in \eqref{tricky} follows by writing 
\begin{multline*}  \int_{\mathbb{R}^2} e^{-2\pi i \langle (x,t), \xi \rangle }  \epsilon^{-1}  \psi'(\xi_2/\epsilon) \phi( \xi ) \, d\xi \\
=  \int_{\mathbb{R}^2} e^{-2\pi i \langle (x,t), (\xi_1, \epsilon \eta ) \rangle }    \psi'(\eta) \left[\phi( \xi_1, \epsilon \eta )  - \phi(\xi_1, 0) \right] \, d\xi_1 \, d\eta  \\
+ \int_{\mathbb{R}^2} \left[ e^{-2\pi i \langle (x,t), (\xi_1, \epsilon \eta ) \rangle } - e^{-2\pi i \langle (x,t), (\xi_1, 0 ) \rangle } \right]   \psi'(\eta) \phi(\xi_1, 0) \, d\xi_1 \, d\eta, \end{multline*}
and applying the dominated convergence theorem. The function $\frac{ f_{s,2}(x,t)}{2\pi i t}$ is in $L^1(\mathbb{R}^2)$ since $s>1$. Hence $\left\langle f_{s,2},  \widehat{\phi} -  \widehat{\phi_{\epsilon}}  \right\rangle \to 0$ as $\epsilon \to 0$. 

For the third function, integrating by parts twice in the $\xi_1$ variable gives that $\left\langle f_{s,3},  \widehat{\phi} -  \widehat{\phi_{\epsilon}}  \right\rangle \to 0$ as $\epsilon \to 0$. This shows that $\left\langle f_s,  \widehat{\phi} -  \widehat{\phi_{\epsilon}} \right\rangle \to 0$ as $\epsilon \to 0$. Hence
\begin{align} \notag &\int_{\mathbb{R}^2}  \frac{\widehat{\phi}(x,t)}{(x^4 + t^2)^{s/4} }  \, dx \, dt\\
\notag &\quad = \lim_{\epsilon \to 0 }\int_{\mathbb{R}^2}  \frac{\widehat{\phi_{\epsilon}}(x,t)}{(x^4 + t^2)^{s/4} }  \, dx \, dt\\
\label{stop3} &\quad = \lim_{\epsilon \to 0} \lim_{M \to \infty} \lim_{N \to \infty} \int_{-M}^M \int_{\mathbb{R}^2} e^{-2\pi i x \xi_1} \phi_{\epsilon}(\xi) \int_{-N}^N  \frac{e^{-2\pi i t \xi_2 }}{(x^4 + t^2)^{s/4} }  \, dt \, d\xi \, dx. \end{align}
 For any $x, \xi_2 \in \mathbb{R}$ both nonzero, and any $s>0$, Theorem~\ref{basset} gives
\begin{equation} \label{bassets} \int_{-\infty}^{\infty} \frac{e^{-2\pi i t \xi_2 }}{(x^4 + t^2)^{s/4} }  \, dt =  \frac{ 2\pi^{s/4}}{\Gamma(s/4) } \left\lvert x\right\rvert^{1-\frac{s}{2} } \left\lvert \xi_2\right\rvert^{\frac{s-2}{4} }K_{\frac{s-2}{4}}\left( 2\pi x^2 \left\lvert \xi_2\right\rvert \right), \end{equation}
where the integral is an improper Riemann integral.  By the second mean value theorem for integrals (or an integration by parts),
\begin{equation} \label{2ndmvt} \left\lvert \int_{-\infty}^{\infty} \frac{e^{-2\pi i t \xi_2 } }{(x^4 + t^2)^{s/4} } \, dt - \int_{-N}^{N} \frac{e^{-2\pi i t \xi_2 }}{(x^4 + t^2)^{s/4} }  \, dt  \right\rvert   \lesssim \frac{1}{\left\lvert \xi_2\right\rvert\left(  \left\lvert x\right\rvert^s + N^{s/2} \right)}, \end{equation}
for any $N >0$. Hence, by three applications of the dominated convergence theorem,
\begin{align*} \eqref{stop3} &= \lim_{\epsilon \to 0} \lim_{M \to \infty}  \int_{-M}^M \int_{\mathbb{R}^2} e^{-2\pi i x \xi_1} \phi_{\epsilon}(\xi) \int_{-\infty}^{\infty}  \frac{e^{-2\pi i t \xi_2 } }{(x^4 + t^2)^{s/4} } \, dt \, d\xi \, dx \\
&\quad =   \int_{\mathbb{R}} \int_{\mathbb{R}^2} e^{-2\pi i x \xi_1} \phi(\xi) \int_{-\infty}^{\infty}  \frac{e^{-2\pi i t \xi_2 }}{(x^4 + t^2)^{s/4} }  \, dt \, d\xi \, dx. \end{align*} 
The first application used \eqref{2ndmvt} to get the dominating function 
\[ \left\lvert \phi_{\epsilon}(\xi)\right\rvert \left[ \frac{1}{\left\lvert \xi_2\right\rvert\left(  \left\lvert x\right\rvert^s + 1 \right)} + \left\lvert x\right\rvert^{1-\frac{s}{2} } \left\lvert \xi_2\right\rvert^{\frac{s-2}{4} }K_{\frac{s-2}{4}}\left( 2\pi x^2 \left\lvert \xi_2\right\rvert \right) \right], \]
where $(x, \xi) \in [-M,M] \times \mathbb{R}^2$, whilst the second and third applications used the dominating function
\[ \left\lvert \phi(\xi) \right\rvert\left\lvert x\right\rvert^{1-\frac{s}{2} } \left\lvert \xi_2\right\rvert^{\frac{s-2}{4} }K_{\frac{s-2}{4}}\left( 2\pi x^2 \left\lvert \xi_2\right\rvert \right), \quad (x, \xi) \in \mathbb{R}^3, \]
which is integrable on $\mathbb{R}^3$ since $1 < s < 3$; by changing variables and considering the behaviour of $K_{\frac{s-2}{4}}$ for small and large arguments. More precisely, for $s \neq 2$ the series definition of $K_{\frac{s-2}{4}}$ gives the behaviour of $K_{\frac{s-2}{4}}$ for small arguments, whilst Corollary~\ref{besselzero} controls the behaviour of $K_0$ for small arguments. Corollary~\ref{asymptotics} controls the asymptotic behaviour of $K_{\frac{s-2}{4}}$ for large arguments. By Fubini, \eqref{bassets}, and a change of variables,
\begin{multline*} \int_{\mathbb{R}^2}  \frac{\widehat{\phi}(x,t)}{(x^4 + t^2)^{s/4} }  \, dx \, dt \\
= \int_{\mathbb{R}^2} \phi(\xi) \left[\frac{ 2\pi^{s/4} \left\lvert \xi_2\right\rvert^{\frac{s-2}{4 }}}{\Gamma(s/4)} \int_{\mathbb{R}} e^{-2\pi i x \xi_1} \left\lvert x\right\rvert^{1-\frac{s}{2} } K_{\frac{s-2}{4}}\left( 2\pi \lvert\xi_2\rvert x^2 \right) \, dx \right]\, d\xi \\
= \frac{ 2\pi^{s/4}}{\Gamma(s/4) }  \int_{\mathbb{R}^2} \phi(\xi) \left[\left\lvert \xi_2\right\rvert^{\frac{s-3}{2} } \int_{\mathbb{R}} e^{-2\pi i x \frac{\xi_1}{\left\lvert \xi_2\right\rvert^{1/2}}} \left\lvert x\right\rvert^{1-\frac{s}{2} } K_{\frac{s-2}{4}}\left( 2\pi x^2 \right) \, dx \right] \, d\xi. \end{multline*} 
This proves \eqref{fourierfunction}. It remains to show that for $\xi_2 \neq 0$,
\begin{equation} \label{basepoint}  \left\lvert \xi_2\right\rvert^{\frac{s-3}{2} } \int_{-\infty}^{\infty} e^{-2\pi i x \frac{\xi_1}{\left\lvert \xi_2\right\rvert^{1/2}}} \left\lvert x\right\rvert^{1-\frac{s}{2} } K_{\frac{s-2}{4}}\left( 2\pi x^2 \right) \, dx \lesssim f_{3-s}(\xi_1,\xi_2), \end{equation}
and that the left-hand side of \eqref{basepoint} is strictly positive. The function 
\begin{equation} \label{decfunction}  - \frac{d}{dx} \left(\left\lvert x\right\rvert^{1-\frac{s}{2} } K_{\frac{s-2}{4}}\left( 2\pi x^2 \right) \right)  = \left[ 4\pi \left\lvert x \right\rvert^{1-s} \right] \cdot  \left[ \left\lvert x\right\rvert^{2\left(\frac{s+2}{4}\right) } K_{\frac{s+2}{4} }(2\pi x^2) \right], \end{equation}
is decreasing on $(0, \infty)$ since it is a product of positive, decreasing functions; by Proposition~\ref{derivativeidentity}, Theorem~\ref{basset}, and the assumption that $s>1$. More precisely, the derivative was calculated by substituting $u=2\pi x^2$, calculating the derivative with respect to $u$ using the second identity in Proposition~\ref{derivativeidentity}, and then applying the chain rule. This verifies the convexity condition in Lemma~\ref{tucklemma}. The other conditions of Lemma~\ref{tucklemma} follow by considering the behaviour of $K_{\frac{s-2}{4}}$ for small and large arguments; using the series definition, Proposition~\ref{derivativeidentity}, Corollary~\ref{asymptotics}, Corollary~\ref{besselzero}, and the assumption that $s< 3$. By Lemma~\ref{tucklemma}, the left-hand side of \eqref{basepoint} is strictly positive for $s \in (1,3)$, and therefore $\widehat{f_s} >0$. 

If $\left\lvert \xi_2\right\rvert^{1/2} \geq \left\lvert \xi_1\right\rvert$, the inequality in \eqref{basepoint} is immediate since the integrand has $L^1$ norm $\lesssim 1$. This covers the case $\left\lvert \xi_2\right\rvert^{1/2} \geq \left\lvert \xi_1\right\rvert$.

Henceforth suppose that $\left\lvert \xi_2\right\rvert^{1/2} < \left\lvert \xi_1\right\rvert$. By symmetry and an integration by parts,
\begin{multline} \label{STAR}  \int_{-\infty}^{\infty} e^{-2\pi i x \frac{\xi_1}{\left\lvert \xi_2\right\rvert^{1/2}}} \left\lvert x\right\rvert^{1-\frac{s}{2} } K_{\frac{s-2}{4}}\left( 2\pi x^2 \right) \, dx \\
= \frac{-\left\lvert \xi_2\right\rvert^{1/2}}{\pi \xi_1} \int_{0}^{\infty} \sin\left(2\pi  x \frac{\xi_1}{\left\lvert \xi_2\right\rvert^{1/2}}\right)  \frac{d}{dx} \left( \left\lvert x\right\rvert^{1-\frac{s}{2} } K_{\frac{s-2}{4}}\left( 2\pi x^2 \right) \right) \, dx. \end{multline}
More precisely, the fact that $\left\lvert x\right\rvert^{1-\frac{s}{2} } K_{\frac{s-2}{4}}\left( 2\pi x^2 \right)$ is even means that only the cosine of the exponential in \eqref{STAR} contributes, and the interval of integration can be changed to $(0, \infty)$ resulting in a factor of 2. The integration by parts is first done on $(\delta, 1/\delta)$ for some $\delta>0$, and then $\delta$ is taken to zero, with the boundary terms vanishing in the limit since $s<3$ and due to the behaviour of $K_{\frac{s-2}{4}}$ for small and large arguments (again using the series definition, Corollary~\ref{asymptotics} and Corollary~\ref{besselzero}).  
By \eqref{decfunction}, 
\begin{align}\notag &\left\lvert \int_0^{\frac{\left\lvert \xi_2\right\rvert^{1/2}}{\left\lvert \xi_1\right\rvert}} \sin\left(2\pi  x \frac{\xi_1}{\left\lvert \xi_2\right\rvert^{1/2}}\right) \frac{d}{dx}\left(  \left\lvert x\right\rvert^{1-\frac{s}{2} } K_{\frac{s-2}{4}}\left( 2\pi x^2 \right) \right) \, dx \right\rvert \\
\notag&\quad \lesssim  \frac{\left\lvert \xi_1\right\rvert }{ \left\lvert \xi_2\right\rvert^{1/2} }   \int_0^{\frac{\left\lvert \xi_2\right\rvert^{1/2}}{\left\lvert \xi_1\right\rvert}} \left\lvert x\right\rvert^{3-\frac{s}{2} } K_{\frac{s+2}{4}}\left( 2\pi x^2 \right)   \, dx \\
\notag &\quad \lesssim   \frac{\left\lvert \xi_1\right\rvert }{ \left\lvert \xi_2\right\rvert^{1/2} } \int_0^{\frac{\left\lvert \xi_2\right\rvert^{1/2}}{\left\lvert \xi_1\right\rvert}} \left\lvert x\right\rvert^{2-s }   \, dx \\
\label{smallxsecond} &\quad \lesssim \left( \frac{ \left\lvert \xi_2\right\rvert^{1/2} }{\left\lvert \xi_1\right\rvert } \right)^{2-s}. \end{align}
It remains to bound the part of the integral over $x>  \frac{\left\lvert \xi_2\right\rvert^{1/2}}{\left\lvert \xi_1\right\rvert}$. Since the right-hand side of \eqref{decfunction} is decreasing, the second mean value theorem for integrals (or an integration by parts) can be applied to get
\begin{align}\notag &\left\lvert \int_{\frac{\left\lvert \xi_2\right\rvert^{1/2}}{\left\lvert \xi_1 \right\rvert}}^{\infty} \sin\left(2\pi  x \frac{\xi_1}{\left\lvert \xi_2\right\rvert^{1/2}}\right) \left( \frac{d}{dx} \left\lvert x\right\rvert^{1-\frac{s}{2} } K_{\frac{s-2}{4}}\left( 2\pi x^2 \right) \right) \, dx \right\rvert \\
\notag &\quad \lesssim \left( \frac{\left\lvert \xi_2\right\rvert^{1/2}}{\left\lvert \xi_1 \right\rvert} \right)^{3-\frac{s}{2} } K_{\frac{s+2}{4}}\left( \frac{ 2\pi \left\lvert \xi_2\right\rvert}{\left\lvert \xi_1 \right\rvert^2}  \right) \\
\label{mvtsecond} &\quad\lesssim \left( \frac{ \left\lvert \xi_2\right\rvert^{1/2} }{\left\lvert \xi_1\right\rvert } \right)^{2-s}. \end{align} 
Substituting \eqref{smallxsecond} and \eqref{mvtsecond} back into \eqref{STAR} shows that \eqref{basepoint} holds for all $s \in (1,3)$. This finishes the proof.  \end{proof}
Let $\iota: \mathbb{R}^2 \to \mathbb{R}^2$ be the inverse map $(x,t) \mapsto (-x,-t)$, and let $\mathcal{F}^{-1}$ be the Euclidean inverse Fourier transform. The proof of the following lemma follows \cite[p.~39]{mattila}. 
\begin{lemma} \label{lemmaplanch} If $s \in (1,3)$ and $\mu$ is a finite compactly supported Borel measure on $\mathbb{R}^2$, then 
\[ \int f_s \, d\left( \iota_{\#} \mu \ast \mu \right) \leq \int_{\mathbb{R}^2} \widehat{f_s} \mathcal{F}^{-1} \left( \iota_{\#} \mu \ast \mu \right). \] \end{lemma}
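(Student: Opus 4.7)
The plan is to combine Lemma~\ref{koranyipotential} with a mollification of $\nu := \iota_{\#}\mu \ast \mu$ and the distributional Parseval identity. The measure $\nu$ is positive, finite, and compactly supported, and from the change of variables $y \mapsto -y$ one has $\widehat{\iota_{\#}\mu} = \overline{\widehat{\mu}}$, hence
\[ \mathcal{F}^{-1}\nu(\xi) = |\widehat{\mu}(\xi)|^2. \]
Both $\mathcal{F}^{-1}\nu$ and $\widehat{f_s}$ are therefore nonnegative (the latter by Lemma~\ref{koranyipotential}), and $f_s$ is lower semicontinuous (extending by $+\infty$ at the origin). These sign conditions will make the limits in the mollification produce inequalities in the right direction.

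Concretely, I would take the Gaussian $\phi_{\epsilon}(x) = \epsilon^{-2} e^{-\pi |x/\epsilon|^2}$, so that $\widehat{\phi_{\epsilon}}(\xi) = e^{-\pi \epsilon^2|\xi|^2}$ is nonnegative and increases pointwise to $1$ as $\epsilon \to 0^+$, and set $\nu_{\epsilon} = \nu \ast \phi_{\epsilon}$. Since $\nu$ is compactly supported and $\phi_{\epsilon}$ is Schwartz, $\nu_{\epsilon}$ is a nonnegative Schwartz function with $\mathcal{F}^{-1}\nu_{\epsilon}(\xi) = |\widehat{\mu}(\xi)|^2 e^{-\pi \epsilon^2 |\xi|^2}$. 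By Lemma~\ref{koranyipotential}, $f_s \in \mathcal{S}'(\mathbb{R}^2)$ with $\widehat{f_s}$ a locally integrable function, so the distributional Parseval identity gives
\[ \int_{\mathbb{R}^2} f_s \, \nu_{\epsilon} \, dx = \langle f_s, \nu_{\epsilon}\rangle = \langle \widehat{f_s}, \mathcal{F}^{-1}\nu_{\epsilon}\rangle = \int_{\mathbb{R}^2} \widehat{f_s} \, \mathcal{F}^{-1}\nu_{\epsilon}, \]
and the left side equals $\int (f_s \ast \phi_{\epsilon}) \, d\nu$ by Fubini (using that $\phi_{\epsilon}$ is even).

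Now I pass $\epsilon \to 0^+$. For the left side, lower semicontinuity of $f_s$ together with Fatou's lemma applied to $(f_s \ast \phi_{\epsilon})(y) = \int f_s(y - \epsilon z) e^{-\pi|z|^2} \, dz$ yields $\liminf_{\epsilon \to 0^+} (f_s \ast \phi_{\epsilon})(y) \geq f_s(y)$ for every $y$, and a second application of Fatou against $d\nu$ gives
\[ \int f_s \, d\nu \leq \liminf_{\epsilon \to 0^+} \int (f_s \ast \phi_{\epsilon}) \, d\nu. \]
For the right side, $\widehat{f_s}(\xi)|\widehat{\mu}(\xi)|^2 e^{-\pi \epsilon^2|\xi|^2}$ is nonnegative and increases monotonically to $\widehat{f_s}(\xi) \mathcal{F}^{-1}\nu(\xi)$, so the monotone convergence theorem delivers equality in the limit. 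Chaining the two estimates produces the stated inequality.

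The main obstacle is the Parseval identity itself: since $f_s \notin L^1 \cup L^2$, one cannot invoke Plancherel directly and must work distributionally. This is exactly where Lemma~\ref{koranyipotential} is pulling its weight—it promotes $\widehat{f_s}$ from an a priori tempered distribution to an honest locally integrable nonnegative function, so that $\langle \widehat{f_s}, \mathcal{F}^{-1}\nu_{\epsilon}\rangle$ is unambiguously a Lebesgue integral and monotone convergence applies. The rest is routine sign-tracking with a Gaussian approximate identity.
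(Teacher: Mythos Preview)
Your proof is correct and follows essentially the same approach as the paper: mollify, apply the distributional Parseval identity (which Lemma~\ref{koranyipotential} makes legitimate), and pass to the limit using Fatou on one side and a convergence theorem on the other. The only cosmetic differences are that the paper uses a compactly supported bump and mollifies $\mu$ rather than $\nu$ (which amounts to convolving $\nu$ with $\iota_{\#}\phi_{\epsilon}\ast\phi_{\epsilon}$), and then uses dominated convergence on the Fourier side after first reducing to the case where $\widehat{f_s}\,\mathcal{F}^{-1}\nu$ is integrable; your Gaussian choice makes the monotone convergence step slightly cleaner since $e^{-\pi\epsilon^{2}|\xi|^{2}}$ is increasing as $\epsilon\downarrow 0$.
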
 
\begin{proof} Let $\phi$ be a smooth, even, non-negative bump function on $\mathbb{R}^2$ with $\int \phi = 1$. For each $\epsilon >0$, let $\phi_{\epsilon}(x) = \epsilon^{-2} \phi(x/\epsilon)$, and let $\mu_{\epsilon} = \mu \ast \phi_{\epsilon}$. Since $\widehat{f_s} \geq 0$ (by Lemma~\ref{koranyipotential}), it may be assumed that $\widehat{f_s} \mathcal{F}^{-1} \left( \iota_{\#} \mu \ast \mu \right)$ is absolutely integrable. By the dominated convergence theorem, a change of variables, and Fatou's lemma,
\begin{align*} &\int_{\mathbb{R}^2} \widehat{f_s} \mathcal{F}^{-1} \left( \iota_{\#} \mu \ast \mu \right) \\
&\quad=  \lim_{\epsilon \to 0}  \int_{\mathbb{R}^2} \widehat{f_s} \mathcal{F}^{-1} \left( \iota_{\#} \mu_{\epsilon} \ast \mu_{\epsilon} \right) \\
&\quad=  \lim_{\epsilon \to 0} \int f_s \, d\left( \iota_{\#} \mu_{\epsilon} \ast \mu_{\epsilon} \right) \\
&\quad =  \lim_{\epsilon \to 0} \int \int \int_{\mathbb{R}^2} \int_{\mathbb{R}^2} f_s(x'-y' ) \phi_{\epsilon}(x-x') \phi_{\epsilon}(y-y') \, dx' \, dy' \, d\mu(x) \, d\mu(y) \\
&\quad = \lim_{\epsilon \to 0} \int \int \int_{\mathbb{R}^2} \int_{\mathbb{R}^2} f_s(x-y - \epsilon w + \epsilon z ) \phi(w) \phi(z) \, dw \, dz \, d\mu(x) \, d\mu(y) \\
&\quad\geq \int f_s \, d\left( \iota_{\#} \mu \ast \mu \right). \qedhere \end{align*}  \end{proof}

\begin{proof}[Proof of Theorem~\ref{mainthm}] By the scaling $(z,t) \mapsto (\lambda z, \lambda^2 t)$, it may be assumed that $A$ is contained in the Korányi unit ball around the origin. It may also be assumed that 
\begin{equation} \label{taxis} \dim\left( A \setminus \left\{ (0,t) \in \mathbb{C} \times \mathbb{R} \right\} \right)  = \dim A, \end{equation}
since otherwise the theorem is immediate. 

Let $\alpha$ be such that $1 < \alpha <  \min\{3,\dim A\}$, and suppose that $1 < s < (1+\alpha)/2$. By Frostman's lemma, it suffices to prove that for any $\epsilon >0$, there exists a Borel probability measure $\mu$ supported on $A$, and a Borel set $E \subseteq [0,\pi)$ with $m([0,\pi) \setminus E) \leq \epsilon$, such that
\[ \int_E \int_{\mathbb{V}_{\theta}^{\perp}} \int_{\mathbb{V}_{\theta}^{\perp}} d_{\mathbb{H}}((z,t), (\zeta, \tau) )^{-s} \, d\left(P_{\mathbb{V}_{\theta}^{\perp}\#}\mu\right)(z,t) \, d\left(P_{\mathbb{V}_{\theta}^{\perp}\#}\mu\right)(\zeta, \tau) \, d\theta < \infty. \]

By \eqref{taxis}, there is a number $c=c(A, \alpha)>0$ such that for any $\epsilon>0$, there exists $\theta_0 \in [0, \pi)$, and a Borel probability measure $\mu$ on $A$, supported in a Korányi ball of radius $1/2$, with 
\begin{equation} \label{frostman} c_{\alpha}(\mu) = \sup_{ \substack{ (z,t) \in \mathbb{H} \\ r >0 } } \frac{ \mu\left( B_{\mathbb{H}}\left((z,t), r\right) \right) }{r^{\alpha}} < \infty, \end{equation} 
 such that
\begin{equation} \label{radiusbound} \left\lvert z\right\rvert > c, \end{equation}
for all $(z,t) \in \supp \mu$, and such that  either
\begin{equation} \label{alt1} \lvert \arg z - \theta_0\rvert_{\bmod 2\pi} < \epsilon^3 \quad \text{ for all } (z,t) \in \supp \mu, \end{equation}
or
\begin{equation} \label{alt2} \lvert \arg z + \pi - \theta_0\rvert_{\bmod 2\pi} < \epsilon^3 \quad \text{ for all } (z,t) \in \supp \mu. \end{equation}

Let $\epsilon >0$ be given, assuming $\epsilon < 1/100$ without loss of generality, let $c$, $\theta_0$ and $\mu$ be as described above, and let 
\begin{equation} \label{Edefn} E = \left\{ \theta \in [0, \pi) : \left\lvert \theta - \theta_0 - \pi/2  \right\rvert_{\bmod \pi} > \epsilon/2 \right\}, \end{equation}
which satisfies $m([0,\pi) \setminus E) \leq \epsilon$. By rotating $\mathbb{V}_{\theta}^{\perp}$ to $\mathbb{R}^2$ and applying Lemma~\ref{lemmaplanch},
\begin{align} \notag &\int_E \int_{\mathbb{V}_{\theta}^{\perp}} \int_{\mathbb{V}_{\theta}^{\perp}}  d_{\mathbb{H}}((z,t), (\zeta, \tau) )^{-s} \, d\left(P_{\mathbb{V}_{\theta}^{\perp}\#}\mu\right)(z,t) \, d\left(P_{\mathbb{V}_{\theta}^{\perp}\#}\mu\right)(\zeta, \tau) \, d\theta \\
\notag &\quad = \int_E \int_{\mathbb{V}_{\theta}^{\perp}}  f_s(\left\lvert z\right\rvert,t) \, d\left(\iota_{\#} P_{\mathbb{V}_{\theta}^{\perp}\#} \mu \ast  P_{\mathbb{V}_{\theta}^{\perp}\#}\mu \right)(z,t) \, d\theta \\
\label{plancherel} &\quad \leq \int_E \int_{\mathbb{R}^2}  \widehat{f_s}(r, \rho)\mathcal{F}^{-1}\left( \iota_{\#}P_{\mathbb{V}_{\theta}^{\perp}\#} \mu \ast P_{\mathbb{V}_{\theta}^{\perp}\#}\mu \right)\left(rie^{i\theta}, \rho\right)  \, dr \, d\rho \, d\theta, \end{align}
where $\iota$ is the inverse map $(z,t) \mapsto (-z,-t)$, and the convolution above is Euclidean convolution (which equals Heisenberg convolution on vertical subgroups of $\mathbb{H}^1$). By Lemma~\ref{koranyipotential}, 
\[ \eqref{plancherel} \lesssim \int_{\mathbb{R}^2}  \left( r^4 + \rho^2 \right)^{(s-3)/4}  \int_E \mathcal{F}^{-1}\left( \iota_{\#} P_{\mathbb{V}_{\theta}^{\perp}\#} \mu \ast P_{\mathbb{V}_{\theta}^{\perp}\#}\mu \right)\left(rie^{i \theta}, \rho\right)   \, d\theta \, dr \, d\rho, \]
where $\mathcal{F}^{-1}\left( \iota_{\#} P_{\mathbb{V}_{\theta}^{\perp}\#} \mu \ast P_{\mathbb{V}_{\theta}^{\perp}\#}\mu \right)$ is non-negative by the convolution theorem. 

Choose $\delta>0$ such that $\delta < ((1+\alpha)/2 -s)/100$. The part of the above integral over $(r^4 + \rho^2)^{1/4} \leq 100$ is finite since $s >0$, and since the inverse Fourier transform of a probability measure is bounded by 1 in sup norm. The remaining integral over $(r^4 + \rho^2)^{1/4} > 100$ can be dyadically partitioned into a sum over sets $B_j$ where $(r^4 + \rho^2)^{1/4} \sim 2^j$ and $j \geq 0$. For each $j$, since $\mathcal{F}^{-1}\left( \iota_{\#} P_{\mathbb{V}_{\theta}^{\perp}\#} \mu \ast P_{\mathbb{V}_{\theta}^{\perp}\#}\mu \right)$ is non-negative, the factor $(r^4 + \rho^2)^{(s-3)/4}$ can be changed to $2^{-j(3-s)}$, and then the sets $B_j$ can be enlarged to
\[ A_j = \left\{ (\rho, \theta, r) :  \theta \in E, \quad \lvert\rho\rvert \leq 2^{2j}, \quad \lvert r\rvert \leq 2^j \right\}, \]
without any need for modulus signs. By then using Fubini to swap the integral defining $\mathcal{F}^{-1}\left( \iota_{\#} P_{\mathbb{V}_{\theta}^{\perp}\#} \mu \ast P_{\mathbb{V}_{\theta}^{\perp}\#}\mu \right)$ with all of the others, it suffices to show that
\begin{multline*} \int \int \left\lvert \int_{A_j}  e^{2\pi i \left\langle \left(rie^{i \theta}, \rho\right), \left(z-\zeta, t-\tau +2 \omega\left( \pi_{V_{\theta}}(z), z \right) - 2 \omega\left( \pi_{V_{\theta}}(\zeta), \zeta \right)\right) \right\rangle}  \, d\rho \, d\theta \, dr \right\rvert \\
d\mu(\zeta,\tau) \, d\mu(z,t) \lesssim 2^{j \left( 3-s-\delta\right)}, \end{multline*}
for any $j \geq 0$, where 
\[ A_j = \left\{ (\rho, \theta, r) :  \theta \in E, \quad \lvert\rho\rvert \leq 2^{2j}, \quad \lvert r\rvert \leq 2^j \right\}. \]
 Let $j \geq 0$ be given. Since $\mu$ is a probability measure, it is enough to show that for any $(z,t) \in \supp \mu$, 
\begin{multline*} \int \left\lvert \int_{A_j}  e^{2\pi i \left\langle \left(rie^{i \theta}, \rho\right), \left(z-\zeta, t-\tau +2 \omega\left( \pi_{V_{\theta}}(z), z \right) - 2 \omega\left( \pi_{V_{\theta}}(\zeta), \zeta \right)\right) \right\rangle}  \, d\rho \, d\theta \, dr \right\rvert d\mu(\zeta,\tau) \\
\lesssim 2^{j \left( 3-s-\delta\right)}. \end{multline*} 
Let $(z,t) \in \supp \mu$ be given. A trivial upper bound for the inner integral is $2^{3j}$, so using $\delta < (\alpha-s)/100$ and the Frostman condition \eqref{frostman} on $\mu$ gives
\begin{multline*} \int_{B_{\mathbb{H}}((z,t), 2^{-j} ) } \left\lvert \int_{A_j}  e^{2\pi i \left\langle \left(rie^{i \theta}, \rho\right), \left(z-\zeta, t-\tau +2 \omega\left( \pi_{V_{\theta}}(z), z \right) - 2 \omega\left( \pi_{V_{\theta}}(\zeta), \zeta \right)\right) \right\rangle}  \, d\rho \, d\theta \, dr \right\rvert \\
 d\mu(\zeta,\tau) \lesssim 2^{j \left( 3-s-\delta\right)}, \end{multline*}
Therefore, it suffices to show that
\begin{multline} \label{ksum} \sum_{k=0}^{j} \int_{B_{\mathbb{H}}\left((z,t), 2^{-k}\right) \setminus B_{\mathbb{H}}\left((z,t), 2^{-(k+1)}\right)} \\
\left\lvert \int_{A_j}  e^{2\pi i \left\langle \left(rie^{i \theta}, \rho\right), \left(z-\zeta, t-\tau +2 \omega\left( \pi_{V_{\theta}}(z), z \right) - 2 \omega\left( \pi_{V_{\theta}}(\zeta), \zeta \right)\right) \right\rangle}  \, d\rho \, d\theta \, dr \right\rvert \\
d\mu(\zeta,\tau) \lesssim 2^{j \left( 3-s-\delta\right)}, \end{multline}
Fix $k \in \{0, \dotsc, j\}$ and $(\zeta, \tau)\in \supp \mu$ with $2^{-(k+1)} \leq d_{\mathbb{H}}((z,t), (\zeta, \tau)) \leq 2^{-k}$. It will be shown that 
\begin{multline} \label{claimedbound} \left\lvert \int_{A_j}  e^{2\pi i \left\langle \left(rie^{i \theta}, \rho\right), \left(z-\zeta, t-\tau +2 \omega\left( \pi_{V_{\theta}}(z), z \right) - 2 \omega\left( \pi_{V_{\theta}}(\zeta), \zeta \right)\right) \right\rangle}  \, d\rho \, d\theta \, dr \right\rvert \\
\lesssim j\min\left\{ 2^{j+3k}, 2^{2j+k}\right\}, \end{multline} 
which will be enough to prove \eqref{ksum}. To see that \eqref{claimedbound} implies \eqref{ksum}, assume that \eqref{claimedbound} holds. Then substituting \eqref{claimedbound} into \eqref{ksum} gives 
\begin{align*} &\sum_{k=0}^j \int_{B_{\mathbb{H}}\left((z,t), 2^{-k}\right) \setminus B_{\mathbb{H}}\left((z,t), 2^{-(k+1)}\right)} \\
&\qquad \left\lvert \int_{A_j}  e^{2\pi i \left\langle \left(rie^{i \theta}, \rho\right), \left(z-\zeta, t-\tau +2 \omega\left( \pi_{V_{\theta}}(z), z \right) - 2 \omega\left( \pi_{V_{\theta}}(\zeta), \zeta \right)\right) \right\rangle}  \, d\rho \, d\theta \, dr \right\rvert d\mu(\zeta,\tau) \\
&\quad \lesssim  j\sum_{k \in [0,j/2]} 2^{j+3k-k \alpha}  + j\sum_{k \in [j/2,j]} 2^{2j+k-k \alpha} \\
&\quad \lesssim  j2^{j\left( \frac{5-\alpha}{2} \right) }  \\
&\quad \lesssim 2^{ j\left(3-s  - \delta\right) }, \end{align*}
since $1< \alpha < 3$ and $0 < \delta < \left( \frac{1+\alpha}{2} -s\right)/100$. This proves that \eqref{ksum} holds conditionally on \eqref{claimedbound}, which (as explained previously) implies the theorem.

It remains to prove \eqref{claimedbound}. If $\left\lvert z-\zeta\right\rvert \leq 2^{-2k}/100$, then $\left\lvert t-\tau+2 \omega(z, \zeta)\right\rvert \geq 2^{-2k}/10$, and hence 
\[ \left\lvert t-\tau +2 \omega\left( \pi_{V_{\theta}}(z), z \right) - 2 \omega\left( \pi_{V_{\theta}}(\zeta), \zeta \right)\right\rvert \gtrsim 2^{-2k}, \]
for all $\theta \in [0, \pi)$, by the identity
\[  \omega\left( \pi_{V_{\theta}}(z), z \right) -  \omega\left( \pi_{V_{\theta}}(\zeta), \zeta \right) =  \omega(z, \zeta) +  \omega\left( \pi_{V_{\theta}}(z+\zeta), z-\zeta \right). \]
It follows that 
\begin{equation} \label{vertcase} \left\lvert \int_{A_j}  e^{2\pi i \left\langle \left(rie^{i \theta}, \rho\right), \left(z-\zeta, t-\tau +2 \omega\left( \pi_{V_{\theta}}(z), z \right) - 2 \omega\left( \pi_{V_{\theta}}(\zeta), \zeta \right)\right) \right\rangle}  \, d\rho \, d\theta \, dr \right\rvert \\ \lesssim 2^{j + 2k}. \end{equation}
This implies \eqref{claimedbound} in this case, so it will henceforth be assumed that $\left\lvert z-\zeta\right\rvert > 2^{-2k}/100$.  Let $p= (z-\zeta)/\lvert z-\zeta \rvert$, $q = (z+\zeta)/\lvert z+\zeta \rvert$ and let 
\[ E^{(1)} = \left\{ \theta \in E: \left\lvert\left\langle p, i e^{i \theta} \right\rangle \right\rvert < \epsilon^3 \right\}, \quad E^{(2)}= E \setminus E^{(1)}.  \]
For $l \in \{1,2\}$ let $A_j^{(l)}= \left\{ (\rho,\theta, r) \in A_j : \theta \in E^{(l)}\right\}$. Then
\begin{multline} \label{twoparts} \left\lvert \int_{A_j}  e^{2\pi i \left\langle \left(rie^{i \theta}, \rho\right), \left(z-\zeta, t-\tau +2 \omega\left( \pi_{V_{\theta}}(z), z \right) - 2 \omega\left( \pi_{V_{\theta}}(\zeta), \zeta \right)\right) \right\rangle}  \, d\rho \, d\theta \, dr \right\rvert \\
\leq  \left\lvert \int_{A_j^{(1)}}  e^{2\pi i \left\langle \left(rie^{i \theta}, \rho\right), \left(z-\zeta, t-\tau +2 \omega\left( \pi_{V_{\theta}}(z), z \right) - 2 \omega\left( \pi_{V_{\theta}}(\zeta), \zeta \right)\right) \right\rangle}  \, d\rho \, d\theta \, dr \right\rvert \\
+ \left\lvert \int_{A_j^{(2)}}  e^{2\pi i \left\langle \left(rie^{i \theta}, \rho\right), \left(z-\zeta, t-\tau +2 \omega\left( \pi_{V_{\theta}}(z), z \right) - 2 \omega\left( \pi_{V_{\theta}}(\zeta), \zeta \right)\right) \right\rangle}  \, d\rho \, d\theta \, dr \right\rvert. \end{multline}
Integrating the exponential above in the $r$-variable would result in a factor of $\langle i e^{i \theta}, z-\zeta \rangle^{-1}$, which is bounded on $A_j^{(2)}$ by definition of $E^{(2)}$. The integral over $A_j^{(1)}$ must be treated differently, since for $\theta \in E^{(1)}$ there is potentially no cancellation occurring in the $r$-integration, but this will be made up for by better cancellation in the $\rho$-integration (on average). 

The second integral in the right-hand side of \eqref{twoparts} satisfies
\begin{align} \notag &\left\lvert \int_{A_j^{(2)}}  e^{2\pi i \left\langle \left(rie^{i \theta}, \rho\right), \left(z-\zeta, t-\tau +2 \omega\left( \pi_{V_{\theta}}(z), z \right) - 2 \omega\left( \pi_{V_{\theta}}(\zeta), \zeta \right)\right) \right\rangle}  \, d\rho \, d\theta \, dr \right\rvert \\
\notag &\quad \leq \int_{\left\{ \theta \in E^{(2)} : \left\lvert t- \tau + 2\omega\left( \pi_{V_{\theta}}(z), z\right) - 2\omega\left(\pi_{V_{\theta}}(\zeta), \zeta \right) \right\rvert  < 2^{-{2j}} \right\}} \\
\notag &\qquad \left\lvert \int_{\pi\left(A_j\right)} e^{2\pi i \left\langle \left(rie^{i \theta}, \rho\right), \left(z-\zeta, t-\tau +2 \omega\left( \pi_{V_{\theta}}(z), z \right) - 2 \omega\left( \pi_{V_{\theta}}(\zeta), \zeta \right)\right) \right\rangle}  \, d\rho \, dr \right\rvert \, d\theta \\
\notag &\qquad + \sum_{l=-\infty}^{2j}  \int_{\left\{ \theta \in E^{(2)} : 2^{-(l+1)} \leq \left\lvert t- \tau + 2\omega\left( \pi_{V_{\theta}}(z), z\right) - 2\omega\left(\pi_{V_{\theta}}(\zeta), \zeta \right) \right\rvert  < 2^{-l} \right\} } \\
\label{pause} &\qquad \quad \left\lvert \int_{\pi\left(A_j\right)} e^{2\pi i \left\langle \left(rie^{i \theta}, \rho\right), \left(z-\zeta, t-\tau +2 \omega\left( \pi_{V_{\theta}}(z), z \right) - 2 \omega\left( \pi_{V_{\theta}}(\zeta), \zeta \right)\right) \right\rangle}  \, d\rho \, dr \right\rvert \, d\theta. \end{align}
where 
\[ \pi\left(A_j\right) = \left\{ (\rho, r) \in \mathbb{R}^2:  \quad \lvert \rho\rvert \leq 2^{2j}, \quad \lvert r\rvert \leq 2^j  \right\} \]
 is the projection of $A_j$ onto the $(\rho,r)$-plane. A similar calculation to \cite[Lemma~2.3]{me} (following \cite[Section 4]{balogh2} and \cite[Lemma~3.5]{fassler}) gives that the function 
\begin{multline} \label{Fdefn}  F(\theta) = t-\tau +2 \omega\left( \pi_{V_{\theta}}(z), z \right) - 2 \omega\left( \pi_{V_{\theta}}(\zeta), \zeta \right) \\
= t-\tau + 2 \omega\left( \pi_{V_{\theta}}(z-\zeta), z+\zeta \right) - 2 \omega(z, \zeta), \end{multline}
satisfies 
\begin{equation} \label{meanvalueprep} 2^{-4k} \lesssim \left\lvert z-\zeta\right\rvert^2\left\lvert z+\zeta\right\rvert^2 = \left\lvert \frac{F'(\theta)}{2} \right\rvert^2 + \left\lvert \frac{F''(\theta)}{4} \right\rvert^2, \end{equation}
where the lower bound in \eqref{meanvalueprep} uses that $\lvert z+\zeta\rvert \gtrsim 1$, which follows from \eqref{radiusbound} and \eqref{alt1}--\eqref{alt2}. By the mean value theorem, the equality in \eqref{meanvalueprep} implies that the set $\left\{ \theta \in [0, \pi) : \left\lvert F'(\theta) \right\rvert < (\lvert z-\zeta\rvert \lvert z+\zeta\rvert)/100 \right\}$ is a union of $\lesssim 1$ intervals; since each connected component has length $\gtrsim 1$. It follows from \cite[Lemma~3.3]{christ} that for any $\varepsilon >0$, 
\begin{equation} \label{measurebound} m\left\{ \theta \in [0, \pi) :  \left\lvert t-\tau +2 \omega\left( \pi_{V_{\theta}}(z), z \right) - 2 \omega\left( \pi_{V_{\theta}}(\zeta), \zeta \right) \right\rvert < \varepsilon  \right\} \lesssim \frac{ \varepsilon^{1/2}}{ 2^{-k}}. \end{equation}
Hence 
\begin{multline*} 
\int_{\left\{ \theta \in E^{(2)} : \left\lvert t- \tau + 2\omega\left( \pi_{V_{\theta}}(z), z\right) - 2\omega\left(\pi_{V_{\theta}}(\zeta), \zeta \right) \right\rvert  < 2^{-{2j}} \right\}} \\
 \left\lvert \int_{\pi\left(A_j\right)} e^{2\pi i \left\langle \left(rie^{i \theta}, \rho\right), \left(z-\zeta, t-\tau +2 \omega\left( \pi_{V_{\theta}}(z), z \right) - 2 \omega\left( \pi_{V_{\theta}}(\zeta), \zeta \right)\right) \right\rangle}  \, d\rho \, dr \right\rvert \, d\theta \\ \lesssim \min\left\{2^{j+3k}, 2^{2j+k} \right\}. \end{multline*}
This can be justified as follows. For each $\theta$ in the domain of integration, the integral over the rectangle $\pi(A_j)$ can be written as a product of two integrals, and each integral can be calculated directly. The modulus of the $r$-integral is $\lesssim 2^{2k}$, and is also trivially $\lesssim 2^j$. The modulus of the $\rho$-integral is $\lesssim 2^{2j}$ trivially. Integrating these two bounds and using \eqref{measurebound} with $\varepsilon = 2^{-2j}$ gives the bound of $\min\left\{2^{j+3k}, 2^{2j+k} \right\}$. A similar argument gives that for any $l \leq 2j$,
\begin{multline*} \int_{\left\{ \theta \in E^{(2)} : 2^{-(l+1)} \leq \left\lvert t- \tau + 2\omega\left( \pi_{V_{\theta}}(z), z\right) - 2\omega\left(\pi_{V_{\theta}}(\zeta), \zeta \right) \right\rvert  < 2^{-l} \right\} } \\
 \left\lvert \int_{\pi\left(A_j\right)} e^{2\pi i \left\langle \left(rie^{i \theta}, \rho\right), \left(z-\zeta, t-\tau +2 \omega\left( \pi_{V_{\theta}}(z), z \right) - 2 \omega\left( \pi_{V_{\theta}}(\zeta), \zeta \right)\right) \right\rangle}  \, d\rho \, dr \right\rvert \, d\theta  \\ \lesssim \min\left\{2^{3k + l/2}, 2^{j +k +l/2} \right\}; \end{multline*}
the differences being that now the $\rho$-integral is $\lesssim 2^{l}$, and the bound from \eqref{measurebound} uses $\varepsilon = 2^{-l}$. By combining these two bounds with \eqref{pause} and summing the geometric series over $l$,
\begin{equation} \label{secondint} \eqref{pause} \lesssim \min\left\{2^{j+3k}, 2^{2j+k} \right\}. \end{equation}

It remains to bound the first integral in the right-hand side of \eqref{twoparts}. By the assumptions on the support of $\mu$ (from \eqref{alt1}--\eqref{alt2}), 
\[ \min\left\{ \left\lvert q -  e^{i \theta_0} \right\rvert, \left\lvert q +  e^{i \theta_0} \right\rvert \right\} < \epsilon^3, \]
and thus for any $\theta \in E$,
\begin{align*} \left\lvert \left\langle q, e^{i \theta} \right\rangle \right\rvert &\geq 1 - \left\lvert \left\langle q, i e^{i \theta} \right\rangle\right\rvert \\
&\geq 1 - \epsilon^3 -  \left\lvert \left\langle e^{i \theta_0}, i e^{i \theta} \right\rangle\right\rvert \\
&= 1- \epsilon^3 - \left\lvert \sin(\theta-\theta_0)\right\rvert \\
&\geq \epsilon^2/10, \end{align*}
by the definition of $E$ (see \eqref{Edefn}). The function $F$ from \eqref{Fdefn} therefore satisfies
\begin{align*} \left\lvert F'(\theta)\right\rvert &\geq 2\left\lvert z-\zeta\right\rvert \cdot \left\lvert z+\zeta\right\rvert\left( \left\lvert \left\langle p, e^{i \theta} \right\rangle  \cdot\right\rvert  \left\lvert \left\langle q, e^{i \theta} \right\rangle\right\rvert -  \left\lvert  \left\langle p, ie^{i \theta} \right\rangle\right\rvert  \cdot \left\lvert  \left\langle q, ie^{i \theta} \right\rangle \right\rvert\right) \\
&\geq 2\left\lvert z-\zeta\right\rvert \cdot \left\lvert z+\zeta\right\rvert\left( \epsilon^2/20  - \epsilon^3 \right) \\
&\gtrsim 2^{-2k}, \end{align*}
for any $\theta \in E^{(1)}$. By the mean value theorem, it follows that for any $\varepsilon >0$,
\[ m\left\{ \theta \in E^{(1)} : \left\lvert t-\tau +2 \omega\left( \pi_{V_{\theta}}(z), z \right) - 2 \omega\left( \pi_{V_{\theta}}(\zeta), \zeta \right) \right\rvert < \varepsilon  \right\} \lesssim \frac{\varepsilon}{ 2^{-2k}}. \]
 Summing over dyadic numbers $\varepsilon$ with $2^{-2j} \leq \varepsilon \lesssim 1$ yields that
\begin{equation} \label{firstint} \left\lvert \int_{A_j^{(1)}}  e^{2\pi i \left\langle \left(rie^{i \theta}, \rho\right), \left(z-\zeta, t-\tau +2 \omega\left( \pi_{V_{\theta}}(z), z \right) - 2 \omega\left( \pi_{V_{\theta}}(\zeta), \zeta \right)\right) \right\rangle}  \, d\rho \, d\theta \, dr \right\rvert \lesssim j2^{j+2k}. \end{equation} Combining \eqref{vertcase}, \eqref{twoparts}, \eqref{secondint}, and \eqref{firstint} gives \eqref{claimedbound} for any $k \in \{0, \dotsc, j\}$. This proves \eqref{claimedbound}, and as explained previously, this finishes the proof of the theorem.  \end{proof} 

\section{Remarks and further questions}
\label{remark}
\begin{sloppypar}
\begin{enumerate}
\item Theorem~\ref{mainthm} also holds if the Korányi metric on the domain is replaced by the (non-equivalent) parabolic metric $d((z,t), (\zeta, \tau)) = \left( \lvert z-\zeta\rvert^4 + \lvert t-\tau\rvert^2 \right)^{1/4}$, with only minor changes to the proof (around \eqref{vertcase}). It is not necessary to vary the metric on the codomain, since in $\mathbb{H}^1$ the two metrics are equivalent on any vertical subgroup.
\item Whether the inequality $\left\lvert \widehat{f_s}\right\rvert \lesssim f_{3-s}$ from Lemma~\ref{koranyipotential} holds for $s \in (0,1)$ is an open problem; the method of proof does not seem to extend to this range. The formula \eqref{fourierfunction} for $\widehat{f_s}$ is possibly just an artefact of the method. 
\item In \cite[Example~7.11 and Remark~7.12]{balogh}, examples are given of sets $A$ of any dimension $\dim A \in (1,2]$, for which the \emph{standard} energy method cannot yield any improvement over the lower bound of $(1+\dim A)/2$. However, the examples from~\cite{balogh} do not seem to preclude further improvement by a \emph{modified} energy method such as that used in the proof of Theorem~\ref{mainthm}.  \end{enumerate}\end{sloppypar}

\end{document}